\DeclareMathAlphabet{\pazocal}{OMS}{zplm}{m}{n}
\let\oldReturn\Return
\renewcommand{\Return}{\State\oldReturn}
\pgfplotsset{compat=1.5}
\newcommand{\T}{\scriptscriptstyle{T}}
\newcommand{\sB}{\scriptscriptstyle{B}}
\newcommand{\sD}{\scriptscriptstyle{D}}
\newcommand{\nT}{n_{\scriptscriptstyle T}}
\newcommand{\mT}{m_{\scriptscriptstyle T}}
\newcommand{\B}{\pazocal{B}}
\newcommand{\pT}{\pazocal{T}}
\newcommand{\sT}{\scriptscriptstyle {\pazocal{T}}}
\newcommand{\K}{\pazocal{K}}
\newcommand{\D}{\pazocal{D}}
\newcommand{\E}{\pazocal{E}}
\newcommand{\U}{{\bf u}}
\newcommand{\Q}{{\scriptstyle Q}}
\newcommand{\remove}[1]{}
\def \cN{{\mathcal N}}
\def \*{\star}
\def \10n{\!\!\!\!\!\!\!\!\!\!}
\newcommand{\R}{\mathbb{R}}
\newcommand{\bA}{\bar{A}}
\newcommand{\bB}{\bar{B}}
\newcommand{\bE}{\bar{E}}
\newtheorem{theorem}{Theorem}
\newtheorem{defn}[theorem]{Definition}
\newtheorem{cor}[theorem]{Corollary}
\newtheorem{rem}[theorem]{Remark}
\newtheorem{prob}[theorem]{Problem}
\newtheorem{prop}[theorem]{Proposition}
\numberwithin{theorem}{section}
\newcommand{\ssymbol}[1]{^{\@fnsymbol{#1}}}
\newcommand{\specificthanks}[1]{\@fnsymbol{#1}}% Inserts a specific \thanks symbol
\title{\LARGE \bf Optimal Network Topology Design in Composite Systems with Constrained Neighbors for Structural Controllability}
\author{Shana~Moothedath\thanks{$^*$Department of Electrical and Computer Engineering,  University of Washington, USA. Email: sm15@uw.edu}\textsuperscript{\specificthanks{1}},
        Prasanna~Chaporkar\thanks{\textsuperscript{\textdagger}Department of Electrical Engineering, Indian Institute of Technology Bombay, India. Email: chaporkar@ee.iitb.ac.in}\textsuperscript{\specificthanks{2}},
        and~Aishwary~Joshi\thanks{$^\ddagger$Quadeye Securities, India. Email: aishwary@quadeyesecurities.com}\textsuperscript{\specificthanks{3}}
        }
\begin{document}
\maketitle
	
\begin{abstract}
 Composite systems  are large complex systems consisting  of interconnected  {\em agents} (subsystems).  Agents in a composite system interact with each other towards performing an intended goal. Controllability is essential to achieve  desired system performance in linear time-invariant  composite systems. Agents in a composite system are often  uncontrollable individually, further, only a few agents receive input. In such a case, the agents share/communicate their private state information with pre-specified neighboring agents so as to achieve controllability. Our objective in this paper is to identify an {\em optimal network topology}, optimal in the sense of minimum cardinality information transfer between agents to guarantee the controllability of the  composite system when the possible neighbor set of each agent is pre-specified.  We focus on graph-theoretic analysis referred to as {\em structural controllability} as numerical entries of system matrices in complex systems are mostly unknown. 
%Structural controllability of a system guarantees controllability of an equivalence class of systems with the same sparsity (zero/nonzero) pattern as that of the structured system. 
We first prove that given a set of agents and the possible set of neighbors, finding a minimum cardinality set of information (interconnections) that must be shared to accomplish structural controllability of the composite system is NP-hard. Subsequently, we present a polynomial-time algorithm that finds a $2$-optimal solution to this NP-hard problem. Our algorithm combines a minimum weight {\em bipartite matching} algorithm and a minimum {\em spanning tree} algorithm and gives a subset of interconnections which when established guarantees structural controllability, such that the worst-case performance is $2$-optimal. Finally, we show that   our approach directly extends to weighted constrained optimal network topology design problem and constrained optimal network topology design problem in switched linear systems.
\end{abstract}
%%%%%%%%% BODY TEXT
\section{Introduction}\label{sec:intro}
  Real-world networks including robot swarms \cite{RafBay:10}, power grids \cite{Ter:11}, and biological systems \cite{Gu_stal:15}, consist of similar entities (subsystems) interacting with each other for performing a desired task. In order to achieve the intended system performance it is essential that the system is {\em controllable} \cite{Kai:80}.  
The exact numerical entries of the system matrices are often not known in complex systems \cite{LiuBar:16} and hence it is impossible to verify controllability of a system using Kalman's criteria \cite{Kai:80}. Graph theoretic analysis  referred to as {\em structural analysis} is widely used in the literature for analyzing controllability and related concepts of systems whose numerical system matrices are unknown and only the zero/nonzero pattern  of these matrices are known. The notion of {\em structural controllability} is the counterpart in structural analysis for controllability (see  \cite{LiuBar:16} for more details).  The strength of structural controllability is that using the topology (sparsity pattern) of the system it concludes Kalman's controllability of {\em almost all} systems of same sparsity pattern \cite{DioComWou:03}. 
We use  structural controllability to verify the controllability of the composite system.

  Complex networks consist of spatially distributed entities referred to as {\em agents} or {\it subsystems}. In many cases the individual subsystems are not structurally controllable on their own. Additionally, only a few of these subsystems called as {\em leaders}  receive input \cite{JafAjoAgh:11}.  To this end, the subsystems (agents) interact and share their private state information with neighboring subsystems to achieve structural controllability as a whole system. We refer to the interaction links  through which the subsystems share information as {\it interconnections} and the full system as the {\it composite system}.  Interconnections are directed edges between subsystems through which one subsystem communicates its state information with another subsystem. In this paper, we use subsystems and agents interchangeably.
 
  In most of the applications,  it is desired to achieve structural controllability  by keeping the amount of information transfer the least because of  security reasons, communication capacity constraints  to minimize the communication cost and delay, and power and resource constraints for battery operated agents. For instance, in multi-agent networked system, the robot swarm consists of many {\em agents}. In a formation control or consensus application with selected leaders, only few agents receive external input and hence each agent is not necessarily structurally controllable. Input addition to achieve structural controllability is not allowed as the input matrix is predefined, however, interconnection links (information transfer) between subsystems are often allowed. Thus for the robot swarm to be a controllable system,  the agents communicate their state informations with other agents \cite{JafAjoAgh:11}. While information sharing among agents are allowed, minimum information sharing is necessary for security reasons and to minimize delay and power consumption due to communication. Further, every agent can share information with pre-specified neighboring agents only.  In multi-agent systems and power applications, the neighbor set of each agent or subsystem is pre-specified depending on communication radius, security, and proximity. 
  %The objective of this paper is to identify a minimum cardinality set of state informations that has to be communicated between neighboring agents so that the networked system accomplishes controllability. In other words, we identify {\em what} state informations to be communicated and to {\em which} neighboring agents it should be communicated for an optimal network topology design that guarantee  controllability of the full system. 

Our aim in this paper is to provide an algorithm that address the following questions in composite systems consisting of subsystems so that it achieves structural controllability:
\begin{itemize}
\item[(1)] Which neighboring agents should communicate?
\item[(2)] What state informations should be communicated?
\end{itemize}
In other words, the objective of this paper is to design an optimal network topology of the composite system by interconnecting neighboring agents such that the composite system is structurally controllable. We refer to this problem as the {\em optimal constrained network topology design problem}. 
%While designing an optimal network topology is polynomial-time solvable \cite{MooChaBel:17_homo}, \cite{MooChaBel:17_hetero} when the  communication graph of subsystems are complete, i.e., every subsystem can possibly share information with any other subsystem, we show in this paper that the constrained case is NP-hard.  To this end, we design a controllable $2$-optimal network topology of the composite system in polynomial time. 
%As information transfer among subsystems are carried
%out using the interconnections and each interconnection link corresponds to transfer of one state information, an optimal network topology with minimum information transfer is an optimal network topology with
%minimum number of interconnections.

\noindent The contributions of this paper are the following:
\begin{enumerate}
\item[$\bullet$] Given a set of subsystems and the possible set of neighbors of each subsystem,  we prove that the optimal constrained network topology design problem is NP-hard. The NP-hardness result is obtained from a polynomial-time reduction of a known NP-complete problem, the {\em degree constrained spanning tree problem} to the decision problem associated with the optimal constrained network topology design problem (Theorem~\ref{thm:NP}).
\item[$\bullet$] We provide a polynomial-time algorithm to find a solution to the optimal  constrained network topology design problem. This algorithm consists of a {\em minimum weight bipartite matching} algorithm and a {\em minimum spanning tree} algorithm (Algorithm~\ref{alg:twostage}).
 We prove that our algorithm is a $2$-optimal\footnote{An $\epsilon$-optimal algorithm is an algorithm whose solution value is at most $\epsilon$ times that of the actual optimum value.}  algorithm for the optimal constrained network topology design problem (Theorem~\ref{thm:approx}).
\item[$\bullet$] We prove that the computational complexity of approximation algorithm that solve optimal constrained network topology design problem is $O(\nT^{2.5})$, where $\nT$ is the dimension of the composite system (Theorem~\ref{thm:comp}).
\item[$\bullet$]  We show that   our approach directly extend to weighted constrained optimal network topology design problem and constrained optimal network topology design problem in switched linear systems (Corollaries~\ref{cor:approx_ext} and~\ref{cor:approx_switch}).
\end{enumerate}

The organization of the paper is as follows: in Section~\ref{sec:rel}, we present the related work in this area. In Section~\ref{sec:prob}, we formulate the optimal constrained network topology design problem. In Section~\ref{sec:prelim}, we present preliminaries and few existing results used in the sequel. In Section~\ref{sec:results}, we prove the main results of this paper, provide an approximation algorithm to solve the optimal constrained network topology design problem, and  illustrate our approximation algorithm using an example. In Section~\ref{sec:ext}, we discuss possible extensions of the results and algorithms. Finally, in Section~\ref{sec:conclu} we conclude the paper.
%%%%%%%%%%%%%%%%%%%%%%%%%%%%%%%%%%%%%%%%%%%%%%%%
\section{Related Work}\label{sec:rel}
Controllability and observability of composite systems was introduced in \cite{Gil:63}, where it is related to the controllability and observability of its subsystems. Most of the earlier research in this area focus on standard interconnections, namely the series and the parallel connections \cite{CheDes:67}, \cite{WolHwa:74}, \cite{DavWan:75}. However, in practice the interconnections in large complex systems may not be only of these standard nature but will be complicated. Interconnections other than the standard ones are also considered in the literature, for instance see \cite{IkeSilYas:83}, \cite{Zho:15}.  Composite systems with subsystems of similar or identical dynamics is  addressed in \cite{Lun:86}, \cite{SunElb:91}.  Conditions for verifying system  theoretic properties of  composite systems made of identical subsystems are derived  in \cite{Lun:86}. Paper  \cite{SunElb:91} assumed that the subsystems are  symmetrically interconnected in addition to identical dynamics and derived conditions for verifying system  theoretic properties.  Specifically, controllability and observability of composite systems are addressed and conditions based on the subsystems are derived in papers \cite{CheDes:67},  \cite{WolHwa:74}, \cite{DavWan:75}, and \cite{Zho:15}. Decentralized controller design in composite systems is given in \cite{SunElb:91} when the  subsystems have identical dynamics and symmetric interconnections. 

Optimization problems in LTI composite systems are also studied in many papers for different problem settings.   The analysis in  \cite{RafBay:10}  yields an optimal network design for efficient average consensus of multi-agent systems. Consensus of multi-agent systems when the  communication graph of the agents have a spanning tree  is addressed \cite{LiDuaCheHua:10}. On the other hand, the approach in  \cite{DaiMaxMes:13} gave an optimal trajectory design  for establishing  connectivity of spatially distributed dynamic agents. Optimal  topology design problem is formulated in  \cite{GroStru:11}  when there is a trade-off between cost of communication links and the closed-loop performance and a solution approach is proposed by formulating it as a mixed-integer semi-definite programming problem.  While papers \cite{RafBay:10}, \cite{LiDuaCheHua:10},   \cite{DaiMaxMes:13}, and \cite{GroStru:11}   addressed the optimal network topology design problem for numerical systems, we perform our analysis  for structured systems. 
Our approach uses structural analysis for an optimal topology design in large complex network. Moreover, our focus is on the  structural controllability  of the network.

{\em Composite structured systems} is well studied in literature and various graph theoretic and algebraic conditions were derived for verifying structural controllability of composite systems in terms of subsystems \cite{Dav:77} -  \cite{LiXiZha:96}.  Specifically, the algorithm given in \cite{CarPeqAguKarJoh:17} accomplishes this using a distributed algorithm. In \cite{LiXiZha:96}, a graphic notion referred to as `g-cactus' is defined using which a sufficient condition is given for structural controllability of composite systems. Note that, all these papers focused on deriving conditions for verifying structural properties of the composite system using the subsystems. While optimal topology design is addressed for numerical systems in many papers,  we study this problem for structured composite systems.

Optimal constrained network topology design problem is addressed in   \cite{MooChaBel:17_homo} and \cite{MooChaBel:17_hetero} when the communication graph of subsystems are complete, i.e., every subsystem can possibly share information with any other subsystem. In such a case, the optimal constrained network topology design problem becomes unconstrained. Paper  \cite{MooChaBel:17_homo} consider the unconstrained version of the problem (i.e., communication graph of agents is complete) when the subsystems are  {\em homogeneous} or  so-called {\em structurally equivalent} and proposed a polynomial-time algorithm. Paper \cite{MooChaBel:17_hetero} considered the unconstrained case of the  optimal constrained network topology design problem when the subsystems are {\em heterogeneous} and gave a polynomial-time algorithm. Note that, the cases considered in  \cite{MooChaBel:17_homo} and \cite{MooChaBel:17_hetero} are polynomial-time solvable while the problem we consider in this paper is NP-hard. Hence the approaches in \cite{MooChaBel:17_homo} and \cite{MooChaBel:17_hetero} do not extend to the constrained case we address in this paper. 
%%%%%%%%%%%%%%%%%%%%%%%%%%%%%%%%%%%%%%%%%%%%%%%%
\section{Problem Formulation}\label{sec:prob}
Consider an LTI system with dynamics $\dot{x} = Ax + Bu$, where $A \in \R^{n \times n}$ and $B \in \R^{n \times m}$. The structural representation of the matrices $A$ and $B$  are represented by $\bA \in \{0, \*\}^{n \times n}$ and $\bB \in \{0, \*\}^{n \times m}$, respectively. Here $\R$ denotes the set of real numbers and $\*$ denotes a free independent parameter. The pair $(\bA, \bB)$ structurally represents a system $(A, B)$ if it satisfies
\begin{eqnarray}\label{eq:struc}
A_{pq} &=& 0 \mbox{~whenever~} \bA_{pq} = 0,\mbox{~and} \nonumber \\
B_{pq} &=& 0 \mbox{~whenever~} \bB_{pq} = 0.
\end{eqnarray}
We refer to $(\bA, \bB)$ that satisfies \eqref{eq:struc} as the {\it structured system} representation of the {\it numerical system} $(A, B)$.  For a structured system,   structural controllability is defined as follows.

\begin{defn}\label{def:struccont}
The structured system $(\bA, \bB)$ is said to be structurally controllable if there exists at least one controllable numerical realization $(A, B)$.
\end{defn}
The matrix pair $(\bA, \bB)$ only indicates locations of zero and nonzero entries. For a given $(A, B)$, $(\bA, \bB)$ structurally represents a class of control systems corresponding to all possible numerical realizations of $(\bA, \bB)$.
\begin{rem}
While Definition~\ref{def:struccont} of structural controllability requires only one controllable realization, it is known that if a system is structurally controllable, then `almost all' numerical systems of the same sparsity structure is controllable  \cite{Rei:88}. In other words, structural controllability is a {\underline{ generic}} property.
\end{rem}
Now we describe structural representation of a composite system consisting of $k$ subsystems.
 Consider $k$ subsystems with structured state matrix $\bA_i \in \{0, \*\}^{n_i \times n_i}$ and structured input matrix  $\bB_i \in \{0, \*\}^{n_i  \times m_i}$,  for $i = 1, \ldots, k$.  The pair $(\bA_i, \bB_i)$ is referred as the $i^{\rm th}$ {\it subsystem} and is denoted by $S_i$.  With this notation, the dynamics of $S_i$ is 
\begin{eqnarray}
%\left.\begin{aligned}
\dot{x}_i(t)& = &\bA_i x_i(t) + \bB_iu_i(t), \mbox{~for~} i=1,\ldots,k.
%y_i(t) &= C_ix_i(t),
%\end{aligned}\right\} ~ i=1,\ldots,k,
\end{eqnarray}
We do not assume that each subsystem is individually structurally controllable. To achieve structural controllability, one need to interconnect subsystems. Each subsystem can interconnect with only a pre-specified set of subsystems, referred as its {\em neighbors}. The set of neighbors of subsystem $S_j$ is denoted by the set $N(S_j)$.  We denote the {\it structured connection matrix} from $S_j$  to  $S_i$ by $\bE_{ij} \in \{0, \*\}^{n_i \times n_j}$. Then  $\bE_{ij} = 0$ if $S_i \notin N(S_j)$. Further,  $\bE_{ij} \neq 0$ implies that any state of $S_j$ can potentially connect to any state of $S_i$. Note that, neighbor relation is not symmetric and hence $S_i \in N(S_j) \nRightarrow S_j \notin N(S_i)$.
The  composite structured system of $k$ subsystems has the following dynamics. 
\begin{eqnarray*}\label{eq:sys}
\dot{x}(t)\hspace{-2 mm}&=&\hspace{-2 mm}\underbrace{\small{\begin{bmatrix}
\bA_1 & \bE_{12} & \cdots & \bE_{1k}\\
\bE_{21} & \bA_2 & \cdots & \bE_{2k}\\
\vdots & \ddots & \ddots & \vdots\\
\bE_{k1} & \bE_{k2} & \cdots & \bA_k
\end{bmatrix}}}_{\bA_{\T}}\hspace*{-1.0 mm}
x(t) +
\underbrace{\small{\begin{bmatrix}
\bB_1 & 0 & \cdots & 0\\
0 & \bB_2 & \cdots & 0\\
\vdots & \ddots & \ddots & \vdots \\
0 & 0 & \cdots & \bB_k
\end{bmatrix} }}_{\bB_{\T}}
u(t),
%+ \underbrace{\small{\begin{bmatrix}
%B_1 & 0 & \cdots & 0\\
%0 & B_2 & \cdots & 0\\
%\vdots & \ddots & \ddots & \vdots \\
%0 & 0 & \cdots & B_k
%\end{bmatrix} }}_B
%u(t), \nonumber \\
%y(t) &=& \underbrace{\small{\begin{bmatrix}
%C_1 & 0 & \cdots & 0\\
%0 & C_2 & \cdots & 0\\
%\vdots & \ddots & \ddots & \vdots \\
%0 & 0 & \cdots & C_k
%\end{bmatrix} }}_C x(t),
\end{eqnarray*}
where $\bA_{\T} \in \{0, \*\}^{\nT \times \nT}$ with $\nT =  \sum_{i = 1}^k n_i$ and $\bB_{\T} \in \{0, \*\}^{\nT \times \mT}$ with  $\mT = \sum_{i = 1}^k m_i$. Here, $x = [x_1^{\scriptscriptstyle T}, \ldots, x_k^{\scriptscriptstyle T}]^{\scriptscriptstyle T}$ with $x_i = [x_1^i,\ldots,x_{n_i}^i]^{\scriptscriptstyle T}$ and $u = [u_1^{\scriptscriptstyle T}, \ldots, u_k^{\scriptscriptstyle T}]^{\scriptscriptstyle T}$ with $u_i = [u_1^i,\ldots,u_{m_i}^i]^{\scriptscriptstyle T}$. The system $(\bA_{\T}, \bB_{\T})$ is said to be a {\em structured composite system} formed by subsystems $S_1, \ldots, S_k$  interconnected through $\bE_{ij}$'s, where $i,j \in \{1, \ldots, k\}$. 
 
Our aim in this paper is to design a structurally controllable optimal network topology of $(\bA_{\T}, \bB_{\T})$. Since we cannot  change the dynamics of the individual subsystem, optimality is with respect to designing interconnection matrices. Formally, the optimization problem we consider is as follows:

\begin{prob} \label{prob:int}
Given $k$ structured subsystems $(\bA_i, \bB_i)$, and the out-neighbor sets $N(S_i)$,  where $\bA_i  \in \{0,\*\}^{n_i \times n_i}$ and $\bB_i \in \{0, \*\}^{n_i \times m_i}$ and $i \in \{1,\ldots, k\}$, find \[\bA^\*_{\T}~ \in~ \arg\min\limits_{\10n~~ \bA_{\T} \in \K}  \norm[\bA_{\T}]_0,\]
where $\K :=\{\bA_{\T}\in \{0, \*\}^{\nT \times \nT}:\mbox{ for~all } i=1, \ldots, k, \mbox{~the~} (n_i \times n_i) \mbox{ diagonal submatrix of } \bA_{\T}$ is $\bA_i$, $\bE_{ij} \neq 0$ only if $S_i \in N(S_j)$, and  $(\bA_{\T}, \bB_{\T})$  is structurally  controllable$\}$. 
\end{prob}

Here, $\norm[\cdot]_0$ denotes the zero matrix 
   norm\footnote{Although $\norm[\cdot]_0$ does not satisfy some of 
   the norm axioms, the number of non-zero entries in a matrix is 
   conventionally referred to as the {\em zero norm}.}. The set $\K$ denotes the set of all feasible solutions of Problem~\ref{prob:int}.  Without loss of generality, we assume that the matrix  $\bB_{\T}$ is nonzero.  Further, we assume that for $\bE_{ij}=\{\*\}^{n_i \times n_j}$ for all $i, j$ satisfying $S_i \in N(S_j)$, the composite structured system is structurally controllable. In other words, when the subsystems are composed with all possible interconnections, the  resulting composite system is structurally controllable and  hence the set  $\K$ is non-empty. Two matrices $\bA'_{\T}$ and $\bA''_{\T}$ in $\K$ differs only in their off-diagonal blocks.  Solving the minimum interconnection problem is same as minimizing the non-zero entries in matrices in $\K$, since for all matrices in $\K$ the diagonal blocks are fixed and  optimization is possible only corresponding to the off-diagonal blocks. This in turn is same as minimizing the interconnections. 

\section{Preliminaries}\label{sec:prelim}
In this section, we detail the preliminaries and introduce few existing results in structural analysis used in the sequel. Most of the existing work in structural analysis is based on graph theoretic analysis  as the interaction of states, inputs, and outputs in a structured system are well captured  in graphs.

Consider a structured  system\footnote{Typical structured system is denoted by $(\bA, \bB)$ and the related concepts discussed in this section can be extended to specific system under consideration.} $(\bA, \bB)$, where $\bA \in \{0, \*\}^{n \times n}$ and $\bB \in \{0, \*\}^{n \times m}$. We first construct the {\em state digraph} $\D(\bA)$ with vertex set $V_{X}= \{x_1, \ldots, x_n\}$ and edge set $E_{X}$, where $(x_p, x_q) \in E_{X}$ if $\bA_{qp} = \*$. Now we construct {\em subsystem digraph} $\D(\bA, \bB)$, with vertex set $V_{X} \cup V_{U}$ and edge set $E_{X} \cup E_{U}$. Here, $V_{U} = \{ u_1, \ldots, u_{m}\}$ and $(u_p, x_q) \in E_{U}$ if  $\bB_{qp} = \*$. All state vertices  are said to be {\em accessible} in $\D(\bA, \bB)$ if there exists a directed path to every state node $x_q$ from some input node $u_p$  in $\D(\bA, \bB)$.
Subsequently, we construct the  {\em state bipartite graph} $\B(\bA)$  with vertex set  $(V_{X'} \cup V_{X})$ and edge set $\E_{X}$. Here, $V_{X'}=\{{x'}_1, \ldots {x'}_{n}\}$ and $({x'}_q, x_p) \in \E_{X} \Leftrightarrow ({x}_p, x_q) \in E_{X}$. The {\em subsystem bipartite graph} $\B(\bA, \bB)$ is built with vertex set $(V_{{X'}}, V_{X}~ \cup ~V_{U})$ and edge set $\E_{X} \cup \E_{U}$, where $({x'}_p, u_q) \in \E_{U} \Leftrightarrow (u_q, x_p) \in E_{U}$. 

Now we define the notion of {\em perfect matching} in a bipartite graph. An undirected graph $G_{\sB} = (V_{\sB}, \widetilde{V}_{\sB}, \E_{\sB})$ is said to be a bipartite graph if $V_{\sB} \cap \widetilde{V}_{\sB} = \emptyset$ and $\E_{\sB} \subseteq V_{\sB} \times \widetilde{V}_{\sB} $. A matching $M_{\sB}$ in $G_{\sB}$ is a subset of edges such that $M_{\sB} \subseteq \E_{\sB}$ and for $(\sigma, \beta), (\lambda, \mu) \in M_{\sB}$ we have $\sigma \neq \lambda$ and $\beta \neq \mu$, where $\sigma, \lambda \in V_{\sB}$ and $\beta, \mu \in \widetilde{V}_{\sB} $. Now we give the classical result for structural controllability.

\begin{prop}[pp.207, \cite{Lin:74}]\label{prop:lin}
A structured system $(\bA, \bB)$ is said to be structurally controllable if and only if all state nodes are accessible in $\D(\bA, \bB)$ and the bipartite graph $\B(\bA, \bB)$ has a perfect matching, i.e., no dilations in $\D(\bA, \bB)$.
\end{prop}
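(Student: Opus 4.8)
The plan is to derive Lin's characterization from Kalman's algebraic rank condition, exploiting that structural controllability is a generic property. A numerical pair $(A,B)$ is controllable iff the Popov--Belevitch--Hautus matrix $[\,sI-A \mid B\,]$ has full row rank $n$ for every $s\in\mathbb{C}$, equivalently iff the controllability matrix $\mathcal{C}=[\,B \mid AB \mid \cdots \mid A^{n-1}B\,]$ has rank $n$. Since each $\*$ of $(\bA,\bB)$ is an independent free parameter, the entries of $\mathcal{C}$ are polynomials in these parameters, and the \emph{generic} rank of $\mathcal{C}$ (its maximal rank over all realizations) equals the largest $r$ admitting an $r\times r$ minor that is a polynomial not identically zero. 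Structural controllability is thus equivalent to this generic rank being $n$, so the theorem reduces to showing generic rank $=n$ iff every state is accessible in $\D(\bA,\bB)$ and $\B(\bA,\bB)$ has a perfect matching. I would first record, via Hall's marriage theorem, that a matching of $\B(\bA,\bB)$ saturating all primed state vertices exists iff no set $S$ of states has strictly fewer than $|S|$ predecessors in $\D(\bA,\bB)$; this is exactly the absence of a dilation and settles the parenthetical equivalence in the statement.

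For necessity I argue by contraposition on the two forbidden configurations. If some state is inaccessible, let $T$ be the set of states unreachable from any input; after a permutation the pair $(\bA,\bB)$ becomes block-triangular with $T$ an autonomous block receiving no edge from inputs or from $V_{X}\setminus T$, so for every realization the coordinate subspace on $T$ is $A$-invariant and unreachable, forcing uncontrollability. If instead a dilation exists, the Hall violation yields a set $S$ of target states whose rows of $[\,\bA \mid \bB\,]$ are supported on strictly fewer than $|S|$ predecessor columns; those $|S|$ rows therefore lie in a coordinate subspace of dimension $<|S|$ and are linearly dependent for every realization, so $\mathrm{rank}[\,A \mid B\,]<n$ and the PBH test fails at $s=0$ for all realizations. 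Either way no realization is controllable, giving necessity.

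The substantive direction is sufficiency: assuming accessibility and a saturating matching, I must show some $n\times n$ minor of $\mathcal{C}$ is a nonzero polynomial, whence almost every realization is controllable. The combinatorial core is to convert the matching together with the accessibility paths into a spanning \emph{cactus} of $\D(\bA,\bB)$ --- a disjoint collection of input-rooted stems and buds covering all states --- and then to assign nonzero values along the cactus edges while setting every off-cactus parameter to zero. Because the cactus routes each state through a unique input/cycle structure, a single monomial in the free parameters survives in the Leibniz expansion of a suitable $n\times n$ submatrix of $\mathcal{C}$, so that minor cannot vanish identically. I expect this step --- producing one non-cancelling term of the controllability determinant from the matching-plus-accessibility data, i.e.\ pairing each column block $A^{j}B$ with states so that no competing monomial arises --- to be the main obstacle. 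Genericity of the rank condition then promotes ``some realization is controllable'' to ``almost all realizations are controllable,'' completing the proof.
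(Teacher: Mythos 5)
First, a note on the baseline for comparison: the paper does not prove this proposition at all --- it is quoted from Lin's 1974 paper as a classical result (the citation ``pp.~207, Lin'' is the proof, as far as this manuscript is concerned). So your attempt has to be measured against the standard proofs of Lin's theorem rather than against anything in this paper. Within your proposal, the framing and the easy direction are sound: the reduction of structural controllability to the generic rank of the controllability matrix, the Hall/K\"onig equivalence between absence of dilations and a matching saturating all primed state vertices, and both halves of necessity are correct. In particular, inaccessibility of a set $T$ does yield, after permutation, an autonomous block (no edges from inputs or from $V_X\setminus T$ into $T$ can exist, else $T$ would be partly accessible), and a dilation on a set $S$ does force the rows of $[\,A \mid B\,]$ indexed by $S$ into a column space of dimension $<|S|$, so $\mathrm{rank}[\,A\mid B\,]<n$ for \emph{every} realization and PBH fails at $s=0$.

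The genuine gap is exactly where you say you expect it: the sufficiency direction is announced but not executed, and the two deferred steps are not technicalities --- they are the entire content of the hard half of Lin's theorem. Step (i), converting ``accessibility plus a saturating matching'' into a spanning cactus, requires real work: the matching yields a disjoint union of paths and cycles, but cycles need not be reachable from any input, and one must graft them as buds onto input-rooted stems using edges whose existence follows only from a careful use of accessibility; this surgery (Lin's reduction via his two canonical forms, or the modern cactus-construction arguments) is precisely what a proof must supply. Step (ii) is also more delicate than your sketch suggests: entries of $A^{j}B$ are sums of many monomials, and distinct permutations in the Leibniz expansion of an $n\times n$ minor of $[\,B \mid AB \mid \cdots \mid A^{n-1}B\,]$ can produce identical monomials with opposite signs, so ``a single monomial survives'' is not automatic. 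The standard way to close this is to bypass the expansion entirely: fix an explicit numerical realization supported on the cactus (e.g., all cactus parameters equal to $1$, suitably perturbed on the buds) and verify controllability of that one realization directly via PBH or by induction on the number of buds, then invoke genericity. As written, your proposal is a correct outline whose crux is unproven, so it does not yet constitute a proof of the proposition.
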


The bipartite graph $\B(\bA, \bB)$ that has a perfect matching is said to satisfy the {\em no-dilation} condition. The subsystems $(\bA_i, \bB_i)$ need not be individually structurally controllable\footnote{The subsystem $S_i = (\bA_i, \bB_i)$ is not structurally controllable if state nodes in the digraph $\D(\bA_i, \bB_i)$ are not accessible and/or the bipartite graph $\B(\bA_i, \bB_i)$ does not have a perfect matching.}, however, one can achieve structural controllability by composing the subsystems using interconnections. The objective of this paper is to achieve it using minimum number of interconnections.
%%%%%%%%%%%%%%%%%%%%%%%%%%%%%%%%%%%%%%%%%%%%%%%%
\section{Main Results: Complexity and Approximation }\label{sec:results}
In this section, we first prove that Problem~\ref{prob:int} is NP-hard and then present a polynomial time approximation algorithm that gives a $2$-optimal solution.
\subsection{Complexity Results}
In this subsection, we analyze the complexity of Problem~\ref{prob:int}. Specifically, we show that Problem~\ref{prob:int} is NP-hard.

\begin{theorem}\label{thm:NP}
Consider $k$ structured subsystems $(\bA_i, \bB_i)$, where $\bA_i  \in \{0,\*\}^{n_i \times n_i}$ and $\bB_i \in \{0, \*\}^{n_i \times m_i}$, and the neighbor set $N(S_j)$, for $i,j  \in \{1,\ldots, k\}$. Then, Problem~\ref{prob:int} is NP-hard.
\end{theorem}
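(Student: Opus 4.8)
The plan is to establish NP-hardness by a polynomial-time reduction from the \emph{degree constrained spanning tree} problem, exactly as the paper's contribution list announces. Recall that the degree constrained spanning tree problem asks, given an undirected graph $G=(V,E)$ and an integer bound $d$, whether $G$ admits a spanning tree in which every vertex has degree at most $d$; this is a classical NP-complete problem (it generalizes Hamiltonian path when $d=2$). First I would fix such an instance $(G,d)$ and build, in polynomial time, a collection of structured subsystems $(\bA_i,\bB_i)$ together with neighbor sets $N(S_j)$, so that the composite system $(\bA_{\T},\bB_{\T})$ becomes structurally controllable using few interconnections precisely when $G$ has a low-degree spanning tree. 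The natural encoding is to create one subsystem per vertex of $G$, to let $S_i\in N(S_j)$ mirror adjacency $\{i,j\}\in E$ (so that only edges of $G$ may be realized as interconnections), and to design the individual $(\bA_i,\bB_i)$ so that each subsystem is \emph{not} accessible/controllable on its own but can be ``repaired'' only by receiving information along the graph's edges.

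The key design step is to engineer the subsystem dynamics so that, via Proposition~\ref{prop:lin}, structural controllability of the whole forces a connected (spanning) structure on the chosen interconnections, while the \emph{degree} bound $d$ is encoded through the interconnection \emph{count} that Problem~\ref{prob:int} minimizes. Concretely, I would arrange that (i) the no-dilation / perfect-matching condition on $\B(\bA_{\T},\bB_{\T})$ is automatically satisfiable once the accessibility requirement is met, so that the binding constraint is accessibility, and (ii) accessibility of every state node requires the interconnection digraph restricted to the subsystem level to reach all subsystems from the few input-carrying (leader) subsystems. This should make the minimum number of interconnections equal to (a fixed affine function of) the number of tree edges plus a penalty that grows with vertex degrees, so that the threshold version of Problem~\ref{prob:int}, ``can structural controllability be achieved with at most $B$ interconnections,'' becomes equivalent to ``does $G$ have a spanning tree of maximum degree at most $d$.'' One would then argue both directions: a valid degree-$d$ spanning tree yields an interconnection set meeting the budget $B$, and conversely any feasible $\bA_{\T}\in\K$ with $\norm[\bA_{\T}]_0\le B$ must induce a spanning, degree-bounded connected subgraph of $G$, hence a degree-constrained spanning tree.

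I expect the main obstacle to be the second step: carefully calibrating the gadget so that the interconnection count $\norm[\bA_{\T}]_0$ tracks the \emph{degree} constraint rather than merely connectivity. Minimum-cost connectivity by itself only forces a spanning tree (which is always achievable), so the hardness must come from the interaction between the per-subsystem structural defects and the number of distinct neighbors each subsystem must talk to. The delicate part is choosing the internal structure of each $(\bA_i,\bB_i)$ — for instance the number of ``unmatched'' or ``inaccessible'' states per subsystem — so that servicing a subsystem forces a number of incident interconnections that scales with, and is minimized by bounding, its degree in the realized graph. I would also have to verify that the reduction is genuinely polynomial (the dimensions $n_i,m_i$ and hence $\nT$ stay polynomial in $|V|,|E|$) and that the standing assumption of the paper — that the fully-interconnected composite system is structurally controllable, so $\K\neq\emptyset$ — holds for the constructed instance. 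Finally, since Problem~\ref{prob:int} is an optimization problem, I would phrase NP-hardness through its natural decision version and invoke the equivalence with the NP-complete degree constrained spanning tree decision problem to conclude.
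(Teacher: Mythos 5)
There is a genuine gap, and it sits exactly where you predicted it would. Your proposal correctly names the source problem (degree constrained spanning tree) and the outer shell of the reduction (one subsystem per vertex of $G$, neighbor sets mirroring the edges of $G$, subsystems individually defective), but the gadget that makes the count encode the degree bound is never constructed, and the design principle you commit to in step (i) would actually defeat the reduction. If the no-dilation condition is ``automatically satisfiable once the accessibility requirement is met,'' then the only binding constraint is accessibility, and accessibility is satisfied by \emph{any} spanning arborescence of the communication graph rooted at the input-carrying subsystem --- which exists with exactly $|V|-1$ interconnections whenever $G$ is connected, irrespective of any degree bound. As you yourself observe, connectivity alone ``only forces a spanning tree (which is always achievable),'' so under (i) the threshold question is always answered yes and the DCST degree bound is never encoded. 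You flag resolving this as ``the delicate part'' but leave it open; that resolution is precisely the content of the paper's proof, so the proposal is a plan rather than a proof.

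The paper resolves it in the opposite direction: it is the \emph{matching} (no-dilation) constraint, not accessibility, that carries the degree bound. Each vertex of $G$ becomes the three-state path subsystem $\bA_i = \left[\begin{smallmatrix} 0 & \* & 0\\ \* & 0 & \*\\ 0 & \* & 0 \end{smallmatrix}\right]$, which has exactly one dilation; only one subsystem (the leader) receives input; the budget is $\alpha = r-1$ with $r = |V|$. Two structural facts then do the work: (a) every non-leader subsystem needs at least one incoming interconnection, so under the budget it receives exactly one, and that single edge must simultaneously restore accessibility and complete the perfect matching; (b) after any perfect matching of a subsystem's own block, exactly one of its states remains unmatched on the right, so each subsystem can serve as the external matching partner of \emph{at most one} other subsystem. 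Together (a) and (b) force the realized interconnection pattern to be a Hamiltonian path starting at the leader, i.e., the paper reduces from DCST with $\gamma = 2$ (Hamiltonian path), not from general $\gamma$ as you propose. A second idea you omit is leader placement: since the path must start at the input-carrying subsystem and the endpoint of a Hamiltonian path in $G$ is unknown, the paper builds $r$ instances $P_1,\ldots,P_r$ of Problem~\ref{prob:int_decision}, one per candidate leader, and proves ``$G$ has a degree-$2$ spanning tree iff at least one $P_j$ is a yes-instance'' --- a polynomial Turing reduction (Theorem~\ref{thm:NP_decision}). Your single-instance scheme with unspecified input locations would fail on graphs whose Hamiltonian paths avoid the chosen leader as an endpoint.
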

The proof of Theorem~\ref{thm:NP} is established using reduction of general instance of a known NP-complete problem, namely {\em degree constrained spanning tree problem} (DCST), to an instance of the decision problem associated with Problem~\ref{prob:int}. We describe below the DCST problem.
\begin{prob}[Degree constrained spanning tree problem \cite{GarJoh:02}]
Given an undirected graph $G=(V, E)$ and a positive integer $\gamma$, where $\gamma \leqslant |V|$, does there exist a spanning tree such that no node in the spanning tree has degree $\geqslant \gamma~ ?$ 
\end{prob}

Note that, $G$ is an undirected {\em connected} graph. The DCST problem  is shown to be NP-complete fo any $ \gamma \geqslant 2$ \cite{GarJoh:02}. When $\gamma=2$, the problem reduces to a Hamiltonian path problem \cite{GarJoh:02}, a known NP-complete problem. We now present the decision problem associated with Problem~\ref{prob:int}. \\
\begin{prob}[Decision version of Problem~\ref{prob:int}]\label{prob:int_decision}
Given $k$ structured subsystems $S_i = (\bA_i, \bB_i)$, the out-neighbor set $N(S_i)$, and  a positive integer $\alpha$, where $\bA_i  \in \{0,\*\}^{n_i \times n_i}$, $\bB_i \in \{0, \*\}^{n_i \times m_i}$ and $i  \in \{1,\ldots, k\}$, does there exist  $\bA'_{\T} \in \K$ such that the number of interconnections in $\bA'_{\T}$ is less than or equal to $\alpha$? 
%such that for all $i=1, \ldots, k$, (i)~the $(n_i \times n_i)$ diagonal submatrix of  $\bA'_{\T}$ is $\bA_i$, (ii)~$\bE_{ij} \neq 0$ only if $S_i \in N(S_j)$,  (iii)~$(\bA'_{\T}, \bB_{\T})$  is structurally  controllable, and (iv)~the number of interconnections in $\bA'_{\T}$ is less than or equal to $\alpha$. 
\end{prob}
\noindent Now we prove that Problem~\ref{prob:int_decision} is NP-complete.
\begin{theorem}\label{thm:NP_decision}
Consider $k$ structured subsystems $S_i = (\bA_i, \bB_i)$, the out-neighbor sets $N(S_i)$, and a positive integer $\alpha$, where $\bA_i  \in \{0,\*\}^{n_i \times n_i}$, $\bB_i \in \{0, \*\}^{n_i \times m_i}$, and $i \in \{1,\ldots, k\}$. Then, Problem~\ref{prob:int_decision} is NP-complete.
\end{theorem}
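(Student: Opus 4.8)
The plan is to establish NP-completeness of Problem~\ref{prob:int_decision} in two parts: membership in NP and NP-hardness. Membership is the routine direction. Given a candidate $\bA'_{\T} \in \{0,\*\}^{\nT \times \nT}$, I would verify in polynomial time that (i) its diagonal blocks equal the prescribed $\bA_i$, (ii) every nonzero off-diagonal block $\bE_{ij}$ respects the neighbor constraint $S_i \in N(S_j)$, (iii) the number of interconnections is at most $\alpha$, and (iv) $(\bA'_{\T}, \bB_{\T})$ is structurally controllable. The last check is polynomial by Proposition~\ref{prop:lin}: accessibility is a reachability test in $\D(\bA'_{\T}, \bB_{\T})$ via breadth-first search, and the no-dilation condition is a maximum-matching computation in $\B(\bA'_{\T}, \bB_{\T})$. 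Hence a certificate of size polynomial in $\nT$ is checkable in polynomial time, so Problem~\ref{prob:int_decision} $\in$ NP.

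For NP-hardness I would give a polynomial-time reduction from the DCST problem, exactly as foreshadowed in the discussion following Theorem~\ref{thm:NP}. Starting from an instance $(G=(V,E),\gamma)$ of DCST with $G$ undirected and connected, I would construct a family of structured subsystems $S_i=(\bA_i,\bB_i)$, one per vertex (or per edge, as the encoding demands), together with neighbor sets $N(S_j)$ that encode the adjacency structure of $G$, and a target $\alpha$ chosen as a function of $|V|$, $|E|$, and $\gamma$. The design should force each subsystem to be individually uncontrollable in a controlled way so that the only mechanism for restoring structural controllability is to install interconnections, and so that the cheapest admissible set of interconnections corresponds to a spanning connection pattern on $G$. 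The degree constraint of DCST must be mirrored by a constraint that caps, for each subsystem, how many incident interconnections a low-cardinality solution can afford, so that a feasible composite topology with at most $\alpha$ interconnections exists precisely when $G$ admits a spanning tree of maximum degree $< \gamma$.

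The key steps, in order, are: first fix the per-subsystem dynamics so that accessibility and the matching condition of Proposition~\ref{prop:lin} each fail in isolation but can be repaired jointly by interconnections; second define $N(S_j)$ from the edges of $G$; third choose $\alpha$ so that the matching (accessibility-repair) cost is a fixed constant across all topologies and the residual budget governs only the spanning-connectivity and degree constraints; fourth prove the forward direction, that a degree-$<\gamma$ spanning tree of $G$ yields an admissible $\bA'_{\T}\in\K$ with at most $\alpha$ interconnections, by reading off interconnections from tree edges and invoking Proposition~\ref{prop:lin} to certify structural controllability; and fifth prove the converse, that any $\bA'_{\T}\in\K$ with at most $\alpha$ interconnections induces a connected subgraph of $G$ whose degree bound forces it to contain a spanning tree meeting the degree constraint.

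The main obstacle I anticipate is the gadget design underpinning the fifth step, the soundness (converse) direction. It is delicate to guarantee that the budget $\alpha$ simultaneously enforces \emph{connectivity} (so accessibility holds across the whole composite system, forcing the interconnection graph to span $V$) and the per-node \emph{degree} cap derived from $\gamma$, while precluding ``cheating'' solutions that use cycles or uneven interconnection counts to satisfy controllability more cheaply than any valid spanning tree would. Calibrating the subsystem sizes $n_i$, the input structure $\bB_i$, and $\alpha$ so that every minimum-cardinality structurally controllable topology is in exact correspondence with a degree-constrained spanning tree — with no spurious feasible solutions on either side — is where the real work lies; once that correspondence is locked down, both directions of the reduction follow from Proposition~\ref{prop:lin} together with the connectivity argument, and combined with the NP membership above this yields NP-completeness.
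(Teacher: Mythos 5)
Your NP-membership argument is fine (and is in fact more explicit than the paper, which leaves that direction implicit): a candidate $\bA'_{\T}$ is a polynomial-size certificate, and Proposition~\ref{prop:lin} reduces its verification to a reachability test plus a maximum bipartite matching, both polynomial. The gap is in the hardness half: what you have written is a specification of what a reduction should accomplish, not a reduction. You yourself flag the gadget design and the soundness direction as ``where the real work lies,'' but that is precisely the content of the theorem; without a concrete choice of $\bA_i$, $\bB_i$, $N(S_i)$, and $\alpha$, together with a proof that low-cardinality controllable topologies correspond exactly to degree-constrained spanning trees, nothing has been established. Two concrete difficulties your sketch does not resolve: (i)~Problem~\ref{prob:int_decision} has no mechanism for imposing a per-subsystem cap on incident interconnections, so the degree bound of DCST for general $\gamma$ cannot simply be ``mirrored by a constraint''---it must emerge from the internal structure of the gadgets; and (ii)~the input-placement problem: the $\bB_i$ are part of the instance, so the reduction must fix in advance which subsystem is the leader, and accessibility then forces every feasible interconnection pattern to be reachable from that leader, whereas a degree-constrained spanning tree of $G$ has no distinguished root (for a Hamiltonian path the leader must in fact be an endpoint, which is unknown a priori).

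The paper resolves both difficulties with devices your plan does not anticipate. It reduces only from DCST with $\gamma=2$ (the Hamiltonian path problem), using an identical $3$-state path gadget $x_1\leftrightarrow x_2\leftrightarrow x_3$ for every vertex of $G$: each non-leader gadget is inaccessible and carries exactly one dilation, so with budget $\alpha=r-1$ each of the $r-1$ non-leaders receives exactly one incoming interconnection, which must simultaneously restore accessibility and serve in the perfect matching; matching feasibility inside each $3$-node gadget then forces out-degree at most one as well, so the interconnection pattern is a directed Hamiltonian path---the degree control emerges from the gadget, not from any explicit cap. For the root problem, the paper constructs $r$ instances $P_1,\dots,P_r$, one per choice of leader, and shows that $G$ has a Hamiltonian path iff at least one $P_j$ is a yes-instance with $\alpha=r-1$; this multi-instance (Turing-style) reduction still yields hardness, since a polynomial algorithm for Problem~\ref{prob:int_decision} would solve all $r$ instances in polynomial time. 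If you want to salvage your plan, the shortest route is to adopt these two devices; attacking general $\gamma$ head-on is strictly harder and unnecessary, since $\gamma=2$ already gives NP-completeness.
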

\begin{proof}
We prove  Problem~\ref{prob:int_decision} is NP-complete using a polynomial time reduction of DCST problem with $\gamma=2$ (Hamiltonian path problem) to Problem~\ref{prob:int_decision}. Firstly, consider a general instance of the DCST problem with undirected graph $G=(V, E)$ and $\gamma=2$, where $|V| = r$.   Subsequently, construct $r$ instances of  Problem~\ref{prob:int_decision}, say $P_1, P_2, \ldots, P_r$. Consider the instance $P_j$. Associate with each vertex $v_i \in V$  a subsystem $(\bA_i, \bB_i)$ and the out-neighbor sets $N(S_i)$, for $i=1, \ldots, k$. Since $|V| = r$, we get $k=r$. Further, all subsystems have the same structured  state matrix $\bA_i $ and only the $j^{\rm th}$ subsystem receive input (for instance $P_j$). Specifically, $\bA_i = \left[\begin{smallmatrix}
0 & \* & 0\\
\* & 0 & \*\\
0 & \* & 0 
\end{smallmatrix} \right]$, for $i=1, \ldots r$, $\bB_j =[\*, 0, 0]^{\T}$, and $\bB_i =0$, for $i\in \{1,2, \ldots, r\}$ and $i \neq j$. The neighbor sets are defined such that if $(v_i, v_j) \in E$, then  $S_j \in N(S_i)$. Figure~\ref{fig:illus_1} shows the set of $r$ subsystems constructed from  $G=(V, E)$.

Now we show that DCST problem on $G$ with $\gamma=2$ has a solution if and only if there exists a solution to Problem~\ref{prob:int_decision} for at least one of the $r$ instances, $P_1, P_2, \ldots, P_r$, of Problem~\ref{prob:int_decision} with $\alpha = r-1$. This proves Theorem~\ref{thm:NP_decision} due to the fact that if Problem~\ref{prob:int_decision} is polynomial-time solvable, then the total complexity of solving $r$ instances of Problem~\ref{prob:int_decision} is polynomial.% Now we prove the if-part.
\\
{\bf If part:} Here we show that if there exists a solution to Problem~\ref{prob:int_decision} on one of the constructed instances $P_1, \ldots, P_r$ with $\alpha = r-1$, say $(\bA^\*_{\T}, \bB_{\T})$,  then there exists a solution to the DCST problem on $G$ with $\gamma=2$. Without loss of generality, assume that there exists a solution to instance $P_1$ of Problem~\ref{prob:int_decision}.   Note that  the subsystems $S_1, \ldots, S_r$ constructed here are such that $S_1:=(\bA_1, \bB_1)$ is structurally controllable, and for $i=2, \ldots, r$, subsystem $S_i:=(\bA_i, \bB_i)$ is inaccessible as $\bB_i =0$ and there exists one dilation, i.e., size of maximum matching in $\B(\bA_i, \bB_i)$ is $n_i - 1$. As the composite system obtained is structurally controllable using $(r-1)$ number of interconnections,  by construction every subsystem is interconnected to at most two subsystems. Construct a graph $\pazocal{T} =(V_{\pazocal{T}}, E_{\pazocal{T}})$, where $ V_{\pazocal{T}} = V$ and $E_{\pazocal{T}} \subset E$ such that $(v_i, v_j) \in E_{\pazocal{T}}$ if directed interconnection edge $(x_p^i, x_q^j)$ is present in the composite system $(\bA^\*_{\T}, \bB_{\T})$, where $p,q \in \{1,2,3\}$. Since each subsystem is connected to at most two other subsystems in $(\bA^\*_{\T}, \bB_{\T})$, $\pazocal{T}$ is a spanning tree with  node-degree less than or  equal to $2$. This complete the if-part.
\\
{\bf Only-if part:} Here we show that if there exists a solution to the DCST problem on $G$ with $\gamma=2$,  then there exists a solution to at least one of the instance of $P_1, \ldots, P_r$ of Problem~\ref{prob:int_decision} on the  constructed subsystems and out-neighbor set for $\alpha = r-1$. Let $\pT = (V_{\pT}, E_{\pT})$, where $V_{\pT} = V$, $E_{\pT} \subset E$ and $|E_{\pT}|=r-1$, be a spanning tree
(path) of $G$ with node-degree $\leqslant 2$.  Also let the vertices in the spanning tree are ordered  such that $\pT := \{v_{i_1}, v_{i_2}, \ldots, v_{i_r} \}$,  $(v_{i_a}, v_{i_b}) \in E_{\pT}$ implies $a<b$, where $\cup_{a=1}^rv_{i_a} = V$. Note that $v_{i_1}$ and $v_{i_r}$ are the leaf nodes of $\pT$. Choose one of the leaf vertices of $\pT$, say $v_{i_1}$.  Now consider the ${i_1}^{\rm th}$ instance of Problem~\ref{prob:int_decision}, i.e., $P_{i_1}$. Recursively select edge $(v_{i_a}, v_{i_b}) \in E_{\pT}$ and compose subsystems $S_{i_a}$ and $S_{i_b}$ of $P_{i_1}^{~\rm th}$ instance of Problem~\ref{prob:int_decision}. Interconnect subsystems $S_{i_a}$ and $S_{i_b}$ through directed interconnection edge $(x_3^{i_a}, x_1^{i_b})$. Notice that the first interconnection edge made here is $(x_3^{i_1}, x_1^{i_2})$ and the composite system formed by subsystems $S_{i_1}$ and $S_{i_2}$  using $(x_3^{i_1}, x_1^{i_2})$ is structurally controllable (since $\bB_{i_1} \neq 0$ guarantees accessibility and $(x_3^{i_1}, x_1^{i_2})$ removes dilation). As $\pT$ is a spanning tree with node-degree $\leqslant 2$, this recursive process terminates in exactly $r-1$ steps such that we compose all subsystems  to form a structurally controllable composite system with $r-1$  number of interconnections.
This proves  the only-if part.
The if and the only-if part together prove that Problem~\ref{prob:int_decision} is NP-complete.
\end{proof}
\noindent Theorem~\ref{thm:NP_decision} now proves one of the main result of this paper.
{\bf Proof~of~Theorem~\ref{thm:NP}:} As the decision version (Problem~\ref{prob:int_decision}) is NP-complete, the optimization problem, Problem~\ref{prob:int}, is NP-hard \cite{GarJoh:02}. Thus proof of Theorem~\ref{thm:NP} directly follows from Theorem~\ref{thm:NP_decision}.
\qed
\begin{figure}[h]
\centering
\begin{tikzpicture}[->,>=stealth',shorten >=1pt,auto,node distance=1.85cm, main node/.style={circle,draw,font=\scriptsize\bfseries}]
\definecolor{myblue}{RGB}{80,80,160}
\definecolor{almond}{rgb}{0.94, 0.87, 0.8}
\definecolor{bubblegum}{rgb}{0.99, 0.76, 0.8}
\definecolor{columbiablue}{rgb}{0.61, 0.87, 1.0}

  \fill[bubblegum] (5,1.0) circle (6.0 pt);

  \fill[almond] (5.0,0) circle (6.0 pt);
  \fill[almond] (5.75,0) circle (6.0 pt);
  \fill[almond] (6.5,0) circle (6.0 pt);
  
  \fill[almond] (7.5,0) circle (6.0 pt);
  \fill[almond] (8.25,0) circle (6.0 pt);
  \fill[almond] (9.0,0) circle (6.0 pt);
  \fill[almond] (11.0,0) circle (6.0 pt);
   \fill[almond] (11.75,0) circle (6.0 pt);
    \fill[almond] (12.5,0) circle (6.0 pt);

   \node at (5,1.0) {\small $u^1_1$};     

  \node at (5,0) {\small $x_1^1$};
  \node at (5.75,0) {\small $x_2^1$};
  \node at (6.5,0) {\small $x_3^1$};

  \node at (7.5,0) {\small $x_1^2$};
  \node at (8.25,0) {\small $x_2^2$};
  \node at (9.0,0) {\small $x_3^2$};
  \node at (11.0,0) {\small $x_1^r$};
    \node at (11.75,0) {\small $x_2^r$};
      \node at (12.5,0) {\small $x_3^r$};
      
      \node at (10,0) {\small $\ldots$};

\draw (5.0,0.75)  ->   (5.0,0.25);  

\path[every node/.style={font=\sffamily\small}]
(7.5,0.25) edge[bend left = 40] node [left] {} (8.25,0.25)
(8.25,-0.25) edge[bend left = 40] node [left] {} (7.5,-0.25)
(8.25,0.25) edge[bend left = 40] node [left] {} (9,0.25)
(9,-0.25) edge[bend left = 40] node [left] {} (8.25,-0.25)

(11,0.25) edge[bend left = 40] node [left] {} (11.75,0.25)
(11.75,-0.25) edge[bend left = 40] node [left] {} (11,-0.25)
(11.75,0.25) edge[bend left = 40] node [left] {} (12.5,0.25)
(12.5,-0.25) edge[bend left = 40] node [left] {} (11.75,-0.25)

(5,0.25) edge[bend left = 40] node [left] {} (5.75,0.25)
(5.75,-0.25) edge[bend left = 40] node [left] {} (5,-0.25)
(5.75,0.25) edge[bend left = 40] node [left] {} (6.5,0.25)
(6.5,-0.25) edge[bend left = 40] node [left] {} (5.75,-0.25);

\node (rect) at (5.85,0.4) [draw,dashed,minimum width=2.25cm,minimum height=1.8cm] {};
\node at (5.8, 1) {$S_1$};
\node (rect) at (8.25,0.4) [draw,dashed,minimum width=2.1cm,minimum height=1.8cm] {};
\node at (8.25, 1) {$S_2$};
\node (rect) at (11.75,0.4) [draw,dashed,minimum width=2.1 cm,minimum height=1.8cm] {};
\node at (11.75, 1) {$S_r$};
\end{tikzpicture}
\caption{\small Structured subsystems $S_1, S_2, \ldots, S_r$ corresponding to instance $P_1$ constructed from an undirected  graph $G=(V, E)$ with $|V| =r$ for proving NP-completeness of Problem~\ref{prob:int_decision}.}
\label{fig:illus_1}
\end{figure}
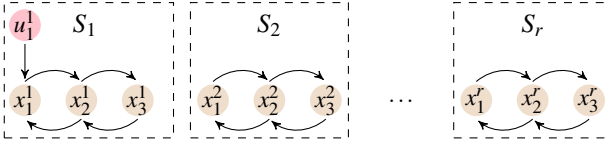
\begin{rem}
Problem~\ref{prob:int_decision} is NP-complete and Problem~\ref{prob:int} is NP-hard 
 even when the composite system is single-input with   irreducible and homogeneous ($\bA_1=\bA_2 = \cdots = \bA_k$) subsystems as the instance constructed in the proofs belongs to this class.
 \end{rem}
%%%%%%%%%%%%%%%%%%%%%%%%%%%%%%%%%%%%%%%%%%%%%%%%%%%
\subsection{Approximation Algorithm}\label{subsec:approx}
In this subsection, we present a polynomial time $2$-optimal algorithm to solve Problem~\ref{prob:int}. The proposed algorithm includes a minimum weight bipartite matching algorithm
%\footnote{In a bipartite graph $G_{\sB} = (V_{\sB}, \widetilde{V}_{\sB}, E_{\sB})$ with $|V_{\sB}| \leqslant |\widetilde{V}_{\sB}|$ and a weight function $w: E_{\sB} \rightarrow \R$, a minimum weight bipartite matching is a matching $M_{\sB}$ such that $|M_{\sB}| = |V_{\sB}|$ and $\sum_{e \in M_{\sB}} w_{\sB}(e) \leqslant \sum_{e \in \widetilde{M}_{\sB}} w_{\sB}(e)$ for all matching $\widetilde{M}_{\sB}$ satisfying $|\widetilde{M}_{\sB}|  = |V_{\sB}|$.} 
and a minimum spanning tree algorithm
%\footnote{In a directed graph $G_{\sD} = (V_{\sD}, E_{\sD})$ with weight function $c: E_{\sD} \rightarrow \R$, a directed spanning tree rooted at $r \in V_{\sD}$ is a subgraph $T$ such that the undirected version of $T$ is a tree and there exists a directed path from $r$ to any vertex in $V_{\sD}$. Cost of spanning tree $T$ is defined as $\sum_{e \in T}c(e)$ and minimum spanning tree is one with minimum cost.} 
\cite{Die:00}. We first define minimum weight bipartite matching and minimum spanning tree below and then give the approximation algorithm.

\begin{defn}
In a bipartite graph $G_{\sB} = (V_{\sB}, \widetilde{V}_{\sB}, \E_{\sB})$ with $|V_{\sB}| \leqslant |\widetilde{V}_{\sB}|$ and a weight function $w: \E_{\sB} \rightarrow \R$, a minimum weight bipartite matching is a matching $M_{\sB}$ such that $|M_{\sB}| = |V_{\sB}|$ and $\sum_{e \in M_{\sB}} w_{\sB}(e) \leqslant \sum_{e \in \widetilde{M}_{\sB}} w_{\sB}(e)$ for all matching $\widetilde{M}_{\sB}$ satisfying $|\widetilde{M}_{\sB}|  = |V_{\sB}|$.
\end{defn}
\begin{defn}
In a directed graph $G_{\sD} = (V_{\sD}, E_{\sD})$ with weight function $c: E_{\sD} \rightarrow \R$, a directed spanning tree rooted at $r \in V_{\sD}$ is a subgraph $T$ such that the undirected version of $T$ is a tree and there exists a directed path from $r$ to any vertex in $V_{\sD}$. Cost of spanning tree $T$ is defined as $\sum_{e \in T}c(e)$ and minimum spanning tree is one with minimum cost.
\end{defn}
The pseudo-code of the proposed algorithm is given in Algorithm~\ref{alg:twostage}. We first construct a bipartite graph $\hat{\B}(\bA_{\T}, \bB_{\T}) $ whose left vertex set is the state nodes of all $k$ subsystems, i.e., $\hat{V}_{\T'}=\cup_{i=1}^k V_{X'_i}$ (Step~\ref{step:Vx}) and right vertex set  is state and input nodes of all $k$ subsystems, i.e., $\hat{V}_{\T} = \cup_{i=1}^k (V_{X_i} \cup V_{U_i})$ (Step~\ref{step:V'x}). The edge set $\hat{\E}_{\T}$ consists of two types of edges: (i)~edges corresponding to nonzero entries in $\bA_i, \bB_i$, for $i=1,\ldots, k$, denoted by $\hat{\E}_{S}$, and (ii)~edges  corresponding to possible interconnections between subsystems  denoted by $\hat{\E}_{N}$, based on the pre-specified out-neighbor set (Step~\ref{step:Ex}). Now we define a weight  function $w_{\sB}: \hat{\E}_{\T} \rightarrow \R$. The weight $w_{\sB}$ assigns zero weight to all edges within a subsystem and assigns weight $1$ to all edges corresponding to interconnections (Step~\ref{step:weight}). A minimum weight perfect matching in $\hat{\B}(\bA_{\T}, \bB_{\T}) $ is denoted by $M_A$ (Step~\ref{step:match}) and the edges in $\hat{\E}_{N}$ that are present in $M_A$ are denoted by $\hat{\E}_{N}(M_A)$ (Step~\ref{step:edgematching}). A minimum weight matching in $\hat{\B}(\bA_{\T}, \bB_{\T}) $ under  weight $w_{\sB}$ gives  a minimum cardinality subset of interconnection edges that will ensure existence of a  perfect matching in the composite system, thereby guaranteeing the no-dilation condition. This completes Stage~1.
\begin{algorithm}[h]
  \caption{Pseudo-code for solving Problem \ref{prob:int} using minimum weight maximum matching and a weighted directed spanning tree algorithm
  \label{alg:twostage}}
  \begin{algorithmic}
\State \textit {\bf Input:} Structured subsystems $S_i =(\bA_i, \bB_i)$ and out-neighbor set $N(S_i)$, for $i=1,\ldots, k$
\State \textit{\bf Output:} Interconnection edges that achieves structural controllability of the composite system
\end{algorithmic}
\hspace*{-2mm}\begin{minipage}{1cm}% width of minipage 1
\[
\rotatebox{90}{\hspace*{-4mm}\small Stage 1\hspace*{4mm}}\left\{ \vphantom{ \begin{array}{c} 5\\5\\5\\5\\5 \\ 5\\5\\5\\5 \\5\\5\\5\\5\\5  \\ \end{array} } \right.
\]
\vspace*{-2mm}%adjusting gap between brackets
\[
\rotatebox{90}{\hspace*{-4mm}\small Stage 2\hspace*{2mm}}\left\{ \vphantom{ \begin{array}{c} 5 \\5\\5\\5\\5\\5\\5\\5\\5\\5\\5\\5\\5\\5\\5\\5\\5\\5 \\5 \\[-1mm] 5    \end{array} } \right.
\] 
\end{minipage}\\[-145mm]%to change width of bracket
\hspace*{4mm}%adjusting gap between minipages
\begin{minipage}{8cm}%width of minipage 2
\begin{algorithmic}[1]
\State  Construct bipartite graph $\hat{\B}(\bA_{\T}, \bB_{\T}) = (\hat{V}_{\T'}, \hat{V}_{\T}, \hat{\E}_{\T})$\label{step:bip}
\State Left vertex set $\hat{V}_{{\T}'} = \cup_{i=1}^k V_{X'_i}$, where $V_{X'_i} = \{x'^i_1, \ldots, x'^i_{n_i} \}$\label{step:Vx}
\State  Right  vertex set  $\hat{V}_{\T} = \cup_{i=1}^k (V_{X_i} \cup V_{U_i})$, where $V_{X_i} = \{x'^i_1, \ldots, x^i_{n_i} \}$ and $V_{U_i} = \{u^i_1, \ldots, u^i_{m_i} \}$\label{step:V'x}
\State Edge set  $\hat{\E}_{\T} := \hat{\E}_S \cup \hat{\E}_{N}$, where $({x'^i}_p, x_q^i) \in \hat{\E}_S$ if $\bA_{i_{pq}}=\*$, $({x'^i}_p, u_q^i) \in \hat{\E}_S$ if $\bB_{i_{pq}}=\*$, and 
$({x'^i}_p, x_q^j)\in \hat{\E}_N$ if $S_i \in N(S_j)$, for $i,j \in \{1,\ldots, k\}$, $i \neq j$, and $ p,q \in \{1, \ldots, n_i\}$\label{step:Ex}
\State $w_{\sB}(e) \leftarrow \begin{cases}
0 , ~~ {\rm for}~ e \in \hat{\E}_S,\\
1,~~ {\rm for}~ e \in \hat{\E}_N. 
\end{cases}$ \label{step:weight}
\State Find minimum weight perfect matching of $\hat{\B}(\bA_{\T}, \bB_{\T})$ under weight function $w_{\sB}$, say $M_A$\label{step:match}
\State Edge set $\hat{E}_N(M_A) := \{ (x^j_q, x^i_p) : (x'^i_p, x^j_q) \in M_A \cap  \hat{\E}_N\}$\label{step:edgematching}
\State Construct the directed graph $\pT_{\cN} = (V_{\cN}, E_{\cN})$\label{step:span_graph}
\State $V_{\cN} := \cN_{S} \cup \{\U\}$, where $\cN_{S} = \{\cN_1, \ldots, \cN_{\Q}\}$ is the set of  SCC's of subsystems $S_1, \ldots, S_k$ and $\U$ is a master input node\label{step:span_graph_node}
\State $E_{\cN} := E^1_{\cN} \cup E^2_{\cN} \cup E^3_{\cN}$\label{step:fulledge}
\State $E^1_{\cN}\leftarrow\{ (\cN_g, \cN_h): x^i_p \in \cN_g,  x^i_q \in \cN_h \mbox{~and~} (x^i_p , x^i_q) \in E_{X_i} \mbox{~for~} p,q \in \{1, \ldots, n_i\},  i \in \{1, \ldots, k\} \}$\label{step:fulledge1}
\State $E^2_{\cN}  \leftarrow \{ (\cN_g, \cN_h): x^i_p \in \cN_g,  x^j_q \in \cN_h \mbox{~and~} S_j \in N(S_i) \mbox{~for~} p,q \in \{1, \ldots, n_i\},  i,j \in \{1, \ldots, k\}, i \neq j \}$\label{step:fulledge2}
\State $E^3_{\cN}\leftarrow \{ (\U, \cN_h):  x^i_q \in \cN_h \mbox{~and}  \mbox{~node~} x^i_q \mbox{~is}\mbox{~accessible~in}~ \D(\bA_i, \bB_i ), \mbox{~for~} i=1, \ldots, k\}$\label{step:fulledge3}
\State $w_{\sT}(e) \leftarrow \begin{cases}
0 , ~~ {\rm for}~ e \in E^1_{\cN},\\
1,~~ {\rm for}~ e \in E^2_{\cN},\\
0 , ~~ {\rm for}~ e \in E^3_{\cN}. 
\end{cases}$ \label{step:weight2}
\State Find minimum weight spanning tree of $\pT_{\cN}$ under weight function $w_{\sT}$, say $T_A$\label{step:tree}
\State Edge set $E_{\cN}(T_A) := \{e: e\in T_A \cap E^2_{\cN} \}$\label{step:edgetree1}
\State Edge set $\widetilde{E}_{\cN}(T_A) := \{ (x^i_p , x^j_q) : x^i_p \in \cN_g, x^j_q \in \cN_h \mbox{~and~} (\cN_g, \cN_h) \in  E_{\cN}(T_A)\}$\label{step:edgetree2}
\State Final set of interconnections is $\hat{E}_N(M_A) \cup \widetilde{E}_{\cN}(T_A)$
\end{algorithmic}
\end{minipage}
\end{algorithm}

In Stage~2, we construct a directed graph $\pT _{\cN} =(V_{\cN}, E_{\cN})$ (Step~\ref{step:span_graph}). The node set of  $\pT _{\cN}$ consists of strongly connected components\footnote{A strongly connected component of a directed graph is a maximal subgraph of the graph in which there exists a directed path between any two distinct vertices.} (SCC's) of the subsystems and a {\em master input node} denoted by $\U$ (Step~\ref{step:span_graph_node}). Node $\U$ corresponds to all input nodes in subsystems $S_1, \ldots, S_k$. The edge set $E_{\cN}$  is partitioned into three categories: (a)~edges between SCC's in the same subsystem, (b)~edges from SCC's of one subsystem to SCC's of its out-neighbors, and (c)~edges from $\U$ to SCC's of subsystems  which contain state nodes that are accessible in their respective digraphs $\D(\bA_i, \bB_i)$, for $i=1, \ldots, k$. Type~(a) corresponds to directed edges in $\D(\bA_i)$ that connect state nodes in two SCC's of $\D(\bA_i)$. Type~(b) consists of edges between all SCC's of subsystems  $S_i$ and $S_j$ if $S_i \in N(S_j)$. Type~(c) consists of edges that correspond to entries in $\bB_i$'s. These are associated with $\*$ entries in $\bB_i$ that connects an input to some state node in SCC's of subsystems.  A minimum weight spanning tree $T_A$ of $\pT_{\cN}$ rooted at $\U$ returns a set of edges $\widetilde{E}_{\cN}(T_A) $, which corresponds to the minimum number of interconnections essential to make the composite system accessible. Now we give the next main result of this paper.
\begin{theorem}\label{thm:approx}
Algorithm~\ref{alg:twostage} that takes as input $k$ structured subsystems $(\bA_i, \bB_i)$, where $\bA_i  \in \{0,\*\}^{n_i \times n_i}$ and $\bB_i \in \{0, \*\}^{n_i \times m_i}$, and the neighbor set $N(S_i)$, for $i  \in \{1,\ldots, k\}$ returns a set of interconnections such that the composite system obtained using the interconnections is a $2$-optimal solution to Problem~\ref{prob:int}.
\end{theorem}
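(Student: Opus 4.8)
The plan is to decompose structural controllability into its two Lin-theoretic ingredients (Proposition~\ref{prop:lin})—\emph{accessibility} and the \emph{no-dilation}/perfect-matching condition—and to argue that Stage~1 of Algorithm~\ref{alg:twostage} solves the no-dilation subproblem exactly, while Stage~2 solves the accessibility subproblem exactly. Writing $\c$ for the optimum of Problem~\ref{prob:int}, I introduce $\ums$ for the minimum number of interconnections needed to guarantee a perfect matching in $\B(\bA_{\T},\bB_{\T})$ and $\uss$ for the minimum number needed to make every state node accessible; let $\uma$ and $\usa$ be the corresponding counts produced by the two stages. The whole argument rests on four claims: (i)~the output is feasible; (ii)~$\uma=\ums$; (iii)~$\usa=\uss$; and (iv)~the lower bounds $\ums\le\c$ and $\uss\le\c$. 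Granting these, the final set $\hat{E}_N(M_A)\cup\widetilde{E}_{\cN}(T_A)$ has cardinality at most $\uma+\usa=\ums+\uss\le 2\c$, which is exactly the claimed $2$-optimality (the standing assumption that the fully-interconnected system is controllable guarantees both stages are nonvacuous).

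First I would verify feasibility. Adding edges to a bipartite graph never decreases the size of a maximum matching, and adding arcs to a digraph never destroys an existing directed path; hence the union retains both the perfect matching guaranteed by Stage~1 and the accessibility guaranteed by Stage~2, so by Proposition~\ref{prop:lin} the composite system is structurally controllable. This also shows the two stages can be unioned without a feasibility conflict.

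Next, the exactness of Stage~1 is quick: because $w_{\sB}$ charges $0$ to intra-subsystem edges and $1$ to interconnection edges, the weight of any perfect matching equals the number of interconnection edges it uses, so a minimum-weight perfect matching uses the fewest interconnection edges over all perfect matchings of $\hat{\B}(\bA_{\T},\bB_{\T})$. Since the attainable perfect matchings are precisely the no-dilation certificates realizable under the neighbor constraints, this minimum equals $\ums$, giving $\uma=\ums$. The analogous statement for Stage~2 is where the main work lies: I must show that a minimum-weight directed spanning tree of $\pT_{\cN}$ rooted at $\U$ selects the fewest weight-$1$ arcs of $E^2_{\cN}$ needed to reach every SCC from the master input, and that reaching every SCC in $\pT_{\cN}$ is equivalent to making every state node accessible in $\D(\bA_{\T},\bB_{\T})$. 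This uses the standard facts that accessibility is a property of the SCC-condensation and that the weight-$0$ type-(a) and type-(c) arcs encode exactly the ``free'' reachability already present inside subsystems and from the inputs; hence $\usa=\uss$.

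Finally, the lower bounds: any optimal $\bA^\*_{\T}$ for Problem~\ref{prob:int} is, by Proposition~\ref{prop:lin}, simultaneously a feasible no-dilation certificate and a feasible accessibility certificate using $\c$ interconnections, so $\ums\le\c$ and $\uss\le\c$. I expect the delicate step to be the equivalence in Stage~2 between SCC-level reachability in $\pT_{\cN}$ and state-level accessibility in the composite digraph—in particular, checking that contracting each subsystem's SCCs and routing interconnections through the condensation neither over- nor under-counts the arcs that an optimal accessibility certificate must pay for. Everything else is monotonicity of matchings/paths and the bookkeeping $|\hat{E}_N(M_A)\cup\widetilde{E}_{\cN}(T_A)|\le\uma+\usa$.
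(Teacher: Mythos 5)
Your proposal is correct and takes essentially the same route as the paper's own proof: decompose structural controllability via Proposition~\ref{prop:lin} into the no-dilation and accessibility subproblems, note that Stage~1 (minimum-weight perfect matching) and Stage~2 (minimum-weight spanning tree rooted at $\U$) each solve their respective subproblem exactly, lower-bound each subproblem optimum by the true optimum $\Delta$ of Problem~\ref{prob:int}, and conclude $|\hat{E}_N(M_A)\cup\widetilde{E}_{\cN}(T_A)|\leqslant 2\Delta$. The only difference is one of explicitness—the paper asserts the exactness of each stage and the inequalities $\Delta\geqslant|\hat{E}_N(M_A)|$, $\Delta\geqslant|\widetilde{E}_{\cN}(T_A)|$ directly, while you spell out the feasibility of the union (monotonicity of matchings and reachability) and flag the SCC-condensation equivalence as the step needing verification—but the underlying argument is identical.
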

\begin{proof}
The interconnections obtained as output of Algorithm~\ref{alg:twostage} is $\hat{E}_N(M_A) \cup \widetilde{E}_{\cN}(T_A)$. Here, $\hat{E}_N(M_A)$ is an interconnection set of minimum cardinality that guarantee the no-dilation condition of the composite system (since these edges corresponds to a minimum weight matching). Similarly, $\widetilde{E}_{\cN}(T_A)$ is an interconnection set of minimum cardinality that guarantee the accessibility of the composite system. Hence $\hat{E}_N(M_A) \cup \widetilde{E}_{\cN}(T_A)$ guarantees accessibility and no-dilation and the composite system obtained using the interconnection set $\hat{E}_N(M_A) \cup \widetilde{E}_{\cN}(T_A)$ is structurally controllable. Let the minimum number of interconnections required for structural controllability of the system is denoted as $\Delta$. Then
\begin{eqnarray}
\Delta &\geqslant & |\hat{E}_N(M_A)|,\nonumber\\
\Delta &\geqslant & |\widetilde{E}_{\cN}(T_A)|,\nonumber\\
2\,\Delta &\geqslant & |\hat{E}_N(M_A) \cup \widetilde{E}_{\cN}(T_A)|.\label{eq:approx2}
\end{eqnarray} 
As the composite system is structurally controllable and Eqn.~\eqref{eq:approx2} holds,  the output of Algorithm~\ref{alg:twostage} is a $2$-optimal solution to Problem~\ref{prob:int}.
\end{proof}

Now we prove the computational complexity of Algorithm~\ref{alg:twostage}.

\begin{theorem}\label{thm:comp}
Algorithm~\ref{alg:twostage} that takes as input a set of $k$ subsystems and their out-neighbor set and returns a subset of interconnection that guarantee structural controllability of the composite systems has $O(\nT^{2.5})$ complexity.
\end{theorem}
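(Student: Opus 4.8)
The plan is to bound the running time of each stage of Algorithm~\ref{alg:twostage} separately and then observe that the total is dominated by the bipartite matching in Stage~1. Throughout I would repeatedly use that every subsystem and every strongly connected component contains at least one state node, so that the number of subsystems $k$, the number of SCC's $\Q$, and the number of input nodes $\mT$ are all $O(\nT)$.

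First I would bound the sizes of the two graphs built by the algorithm. The bipartite graph $\hat{\B}(\bA_{\T},\bB_{\T})$ has left vertex set of size $\nT$ and right vertex set of size $\nT+\mT = O(\nT)$, while its edge set $\hat{\E}_{\T}=\hat{\E}_S\cup\hat{\E}_N$ is a subset of all state-to-state and interconnection pairs, so $|\hat{\E}_{\T}| = O(\nT^2)$. Likewise, in the directed graph $\pT_{\cN}$ the node set $V_{\cN}=\cN_S\cup\{\U\}$ has cardinality $\Q+1 = O(\nT)$ because the SCC's partition the state nodes, and the edge set $E_{\cN}=E^1_{\cN}\cup E^2_{\cN}\cup E^3_{\cN}$ is again a subset of ordered pairs of nodes, so $|E_{\cN}| = O(\nT^2)$.

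Next I would analyze Stage~1. Building $\hat{\B}(\bA_{\T},\bB_{\T})$ and assigning the weights $w_{\sB}$ (Steps~\ref{step:bip}--\ref{step:weight}) inspects each candidate edge once and costs $O(\nT^2)$. The dominant step is Step~\ref{step:match}, the minimum weight perfect matching. The crucial point is that $w_{\sB}$ takes only the values $0$ and $1$: minimizing the weight of a perfect matching is then equivalent to maximizing the number of zero-weight (intra-subsystem) edges it uses, and such a $0/1$-weighted perfect matching can be computed by an augmenting-path / Hopcroft--Karp-type procedure in $O(E\sqrt{V})$ time. With $E=O(\nT^2)$ and $V=O(\nT)$ this yields $O(\nT^{2.5})$, and extracting $\hat{E}_N(M_A)$ in Step~\ref{step:edgematching} is linear in the matching size. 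I expect this to be the main obstacle: the $\nT^{2.5}$ bound rests entirely on exploiting the $0/1$ structure of $w_{\sB}$ to avoid the generic $O(V^3)$ cost of the Hungarian algorithm, so the argument must make explicit why an $O(E\sqrt{V})$ matching routine is applicable here.

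Finally I would analyze Stage~2. Computing the SCC's of the state digraphs $\D(\bA_i)$ that form $\cN_S$ is done by Tarjan's algorithm in $O(V+E)=O(\nT^2)$ time, and constructing $\pT_{\cN}$ with the weights $w_{\sT}$ (Steps~\ref{step:span_graph}--\ref{step:weight2}) is $O(\nT^2)$. The minimum weight directed spanning tree rooted at $\U$ in Step~\ref{step:tree} is a minimum arborescence, computable by the Chu--Liu/Edmonds algorithm with its efficient $O(E+V\log V)=O(\nT^2)$ implementation, after which recovering $E_{\cN}(T_A)$ and $\widetilde{E}_{\cN}(T_A)$ in Steps~\ref{step:edgetree1}--\ref{step:edgetree2} is linear. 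Thus every step of Stage~2 is $O(\nT^2)$ and is dominated by Stage~1. Summing the two stages gives an overall complexity of $O(\nT^{2.5})$, which proves the theorem.
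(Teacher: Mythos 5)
Your proof takes essentially the same approach as the paper's: bound each stage separately---the graph constructions in $O(\nT^2)$, the minimum-weight bipartite matching in $O(\nT^{2.5})$, and the SCC computation plus the minimum directed spanning tree in $O(\nT^2)$ (the paper likewise cites the Gabow--Galil--Spencer--Tarjan arborescence algorithm)---and conclude that the Stage~1 matching dominates. You actually supply \emph{more} justification than the paper, which simply asserts the $O(\nT^{2.5})$ matching bound; the one caveat is that your claim of an $O(E\sqrt{V})$ Hopcroft--Karp-type routine for $0/1$-weighted minimum-weight \emph{perfect} matching is itself folklore rather than a theorem you can cite directly (the standard rigorous bound, Gabow--Tarjan scaling, carries an extra $\log \nT$ factor), but this step is asserted equally without proof in the paper, so your argument is no weaker than the paper's own.
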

\begin{proof}
Algorithm~\ref{alg:twostage} consists of a minimum weight matching algorithm and a minimum spanning tree algorithm. Construction of the bipartite graph graph has complexity $O(\nT^2)$, where $\nT = \sum_{i=1}^k n_i$ and $n_i$ is the dimension of subsystem $S_i$ for $i=1, \ldots, k$. Minimum weight matching algorithm  has complexity $O(\nT^{2.5})$.

The construction of the directed graph for solving the minimum spanning tree problem has complexity $O(\nT^2)$. A minimum spanning tree algorithm on this graph involves $O(\nT^2)$ computations  \cite{GabGalSpeTar:86}. This proves complexity of Algorithm~\ref{alg:twostage} is $O(\nT^{2.5})$.
\end{proof}
This completes the discussion on approximation algorithm and its complexity.

\begin{rem}
Due to duality between controllability and observability in LTI systems all results of this paper directly follow to the observability problem, where the objective is to find a minimum cardinality set of interconnections among subsystems with pre-specified neighbor set and output matrix that guarantee structural observability of the composite system.
\end{rem}
%%%%%%%%%%%%%%%%%%%%%%%%%%%%%%%%%%%%%%%%%%%%%
\subsection{Special Case: Complete Communication Graph}\label{subsec:complete}
A special case of Problem~\ref{prob:int_decision} is when the neighbor set is unconstrained, i.e., a complete communication graph. The communication graph is said to be complete if  any  two distinct subsystems can interconnect each other. In such a case,  the optimal constrained network topology design problem becomes unconstrained. However, hardness result for this case is not known.  We gave polynomial-time algorithms for the unconstrained case  in \cite{MooChaBel:17_homo} and \cite{MooChaBel:17_hetero} for homogeneous (structurally equivalent) and heterogeneous irreducible\footnote{A subsystem $S_i = (\bA_i, \bB_i)$ is said to be irreducible if its digraph $\D(\bA_i)$ is strongly connected.}  subsystems, respectively. While the cases considered in \cite{MooChaBel:17_homo} and \cite{MooChaBel:17_hetero} are polynomial-time solvable, the tractability of the unconstrained case for general  (not irreducible) heterogeneous subsystems is unknown. Algorithm~\ref{alg:twostage} and Theorem~\ref{thm:approx} apply to Problem~\ref{prob:int} when the communication graph is complete (unconstrained) for general subsystem topology and return a $2$-optimal solution.
%%%%%%%%%%%%%%%%%%%%%%%%%%%%%%%%%%%%%%%%%%%%%%
\subsection{Illustrative Example}\label{subsec:illus} 
In this subsection, we apply Algorithm~\ref{alg:twostage} on a set of subsystems. Consider four subsystems $S_1, S_2, S_3,$ and $S_4$  shown in Figure~\ref{fig:illus_2}. Let the neighbor set of the subsystems are given by $N(S_1) = S_3$, $N(S_2) = S_1,$ $N(S_3) = \{S_2, S_4\}$, and $N(S_4) = \emptyset$.

The bipartite graph $\hat{\B}(\bA_{\T}, \bB_{\T})$ constructed for the subsystems is given in Figure~\ref{fig:bip1}. A minimum weight matching in $\hat{\B}(\bA_{\T}, \bB_{\T})$ is shown in Figure~\ref{fig:bip2}.
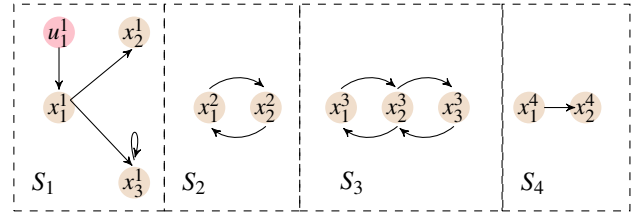
\begin{figure}[h]
\centering
\begin{tikzpicture}[scale=1, ->,>=stealth',shorten >=1pt,auto,node distance=1.85cm, main node/.style={circle,draw,font=\scriptsize\bfseries}]
\definecolor{myblue}{RGB}{80,80,160}
\definecolor{almond}{rgb}{0.94, 0.87, 0.8}
\definecolor{bubblegum}{rgb}{0.99, 0.76, 0.8}
\definecolor{columbiablue}{rgb}{0.61, 0.87, 1.0}

  \fill[bubblegum] (5,1.0) circle (6.0 pt);

  \fill[almond] (5.0,0) circle (6.0 pt);
  \fill[almond] (6,1) circle (6.0 pt);
  \fill[almond] (6, -1) circle (6.0 pt);
  
  \fill[almond] (7.0,0) circle (6.0 pt);
  \fill[almond] (7.75,0) circle (6.0 pt);

  \fill[almond] (8.75,0) circle (6.0 pt);
  \fill[almond] (9.5,0) circle (6.0 pt);
   \fill[almond] (10.25,0) circle (6.0 pt);
   
   \fill[almond] (11.25,0) circle (6.0 pt);
    \fill[almond] (12.0,0) circle (6.0 pt);

   \node at (5,1.0) {\small $u^1_1$};     

  \node at (5,0) {\small $x_1^1$};
  \node at (6,1) {\small $x_2^1$};
  \node at (6,-1) {\small $x_3^1$};

  \node at (7.0,0) {\small $x_1^2$};
  \node at (7.75,0) {\small $x_2^2$};
 
  \node at (8.75,0) {\small $x_1^3$};
  \node at (9.5,0) {\small $x_2^3$};
  \node at (10.25,0) {\small $x_3^3$};
      
    \node at (11.25,0) {\small $x_1^4$};
     \node at (12.0,0) {\small $x_2^4$};

\draw (5.15,0.1)  ->   (6,0.8);  
\draw (5.15,0.1)  ->   (6,-0.8);  
\draw (5.0,0.8)  ->   (5,0.2);  

\draw (11.45,0)  ->   (11.9,0); 

\path[every node/.style={font=\sffamily\small}]
(6,-0.7) edge[loop above] (6,-0.7)
(7.0,0.25) edge[bend left = 40] node [left] {} (7.75,0.25)
(7.75,-0.25) edge[bend left = 40] node [left] {} (7.0,-0.25)

(8.75,0.25) edge[bend left = 40] node [left] {} (9.5,0.25)
(9.5,-0.25) edge[bend left = 40] node [left] {} (8.75,-0.25)
(9.5,0.25) edge[bend left = 40] node [left] {} (10.25,0.25)
(10.25,-0.25) edge[bend left = 40] node [left] {} (9.5,-0.25);

\node (rect) at (5.4,0) [draw,dashed,minimum width=2.0cm,minimum height=2.75cm] {};
\node at (4.8, -1) {$S_1$};
\node (rect) at (7.3,0) [draw,dashed,minimum width=1.8cm,minimum height=2.75cm] {};
\node at (6.8, -1) {$S_2$};
\node (rect) at (9.55,0) [draw,dashed,minimum width=2.7 cm,minimum height=2.75cm] {};
\node at (8.9, -1) {$S_3$};
\node (rect) at (11.74,0) [draw,dashed,minimum width=1.7 cm,minimum height=2.75cm] {};
\node at (11.3, -1) {$S_4$};
\end{tikzpicture}
\caption{\small Structured subsystems $S_1, S_2, S_3,$ and $S_4$ with neighbor sets $N(S_1) = S_3$, $N(S_2) = S_1,$ $N(S_3) = \{S_2, S_4\}$, and $N(S_4) = \emptyset$. We apply Algorithm~\ref{alg:twostage} on this set of subsystems to obtain an optimal topology.}
\label{fig:illus_2}
\end{figure}

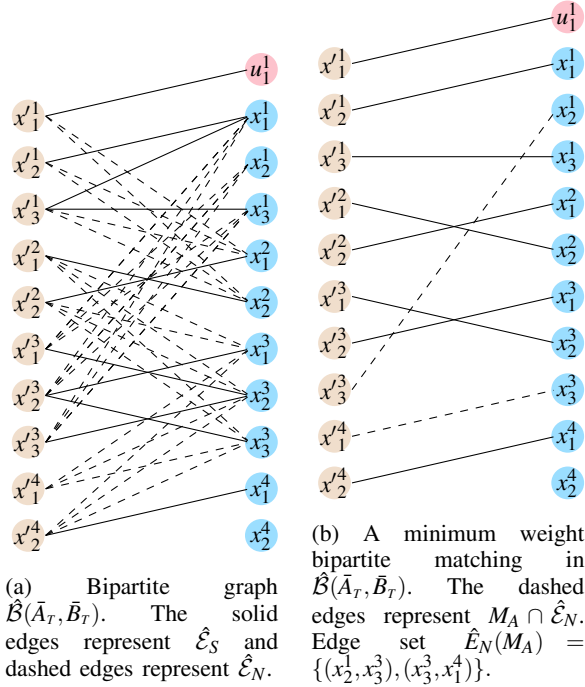
\begin{figure}[h]
\centering
\begin{subfigure}[b]{0.2\textwidth}
\centering
\begin{tikzpicture}[scale=1.55,shorten >=1pt,auto,node distance=1.85cm, main node/.style={circle,draw,font=\scriptsize\bfseries}]
\definecolor{almond}{rgb}{0.94, 0.87, 0.8}
\definecolor{bubblegum}{rgb}{0.99, 0.76, 0.8}
\definecolor{columbiablue}{rgb}{0.61, 0.87, 1.0}
\definecolor{myblue}{RGB}{80,80,160}
\definecolor{mygreen}{RGB}{80,160,80}
\definecolor{myred}{RGB}{144, 12, 63}
          
          \fill[almond] (1,-5) circle (4.0 pt);
          \fill[almond] (1,-5.4) circle (4.0 pt);
          \fill[almond] (1,-5.8) circle (4.0 pt);
          \fill[almond] (1,-6.2) circle (4.0 pt);
          \fill[almond] (1,-6.6) circle (4.0 pt);
          \fill[almond] (1,-7) circle (4.0 pt);
           \fill[almond] (1,-7.8) circle (4.0 pt);
          \fill[almond] (1,-7.4) circle (4.0 pt);
         \fill[almond] (1,-8.2) circle (4.0 pt);
          \fill[almond] (1,-8.6) circle (4.0 pt);
          
          \node at (1,-5) {\small ${x'}_1^1$};
          \node at (1,-5.4) {\small ${x'}_2^1$};
          \node at (1,-5.8) {\small ${x'}_3^1$};
          \node at (1,-6.2) {\small ${x'}_1^2$};
          \node at (1,-6.6) {\small ${x'}_2^2$};
          \node at (1,-7) {\small ${x'}_1^3$};
          \node at (1,-7.4) {\small ${x'}_2^3$};
          \node at (1,-7.8) {\small ${x'}_3^3$};
         \node at (1,-8.2) {\small ${x'}_1^4$};
          \node at (1,-8.6) {\small ${x'}_2^4$};
          
          \fill[bubblegum] (3,-4.6) circle (4.0 pt);   

          \fill[columbiablue] (3,-5) circle (4.0 pt);
          \fill[columbiablue] (3,-5.4) circle (4.0 pt);
          \fill[columbiablue] (3,-5.8) circle (4.0 pt);
          \fill[columbiablue] (3,-6.2) circle (4.0 pt);
          \fill[columbiablue] (3,-6.6) circle (4.0 pt);
          \fill[columbiablue] (3,-7) circle (4.0 pt);
          \fill[columbiablue] (3,-7.4) circle (4.0 pt);
          \fill[columbiablue] (3,-7.8) circle (4.0 pt);
          \fill[columbiablue] (3,-8.2) circle (4.0 pt);
          \fill[columbiablue] (3,-8.6) circle (4.0 pt);
     
          \node at (3,-4.6) {\small $u^1_1$};    
          \node at (3,-5) {\small $x_1^1$};
          \node at (3,-5.4) {\small $x_2^1$};
          \node at (3,-5.8) {\small $x_3^1$};
          \node at (3,-6.2) {\small $x_1^2$};
          \node at (3,-6.6) {\small $x_2^2$};
          \node at (3,-7) {\small $x_1^3$};
          \node at (3,-7.4) {\small $x_2^3$};
          \node at (3,-7.8) {\small $x_3^3$};
          
           \node at (3,-8.2) {\small $x_1^4$};
          \node at (3,-8.6) {\small $x_2^4$};
          
          \draw[] (1.15,-5) -- (2.9,-4.6);
          \draw [](1.15,-5.4) -- (2.9,-5.0);
          \draw [](1.15,-5.8) -- (2.9,-5.0);
          \draw [](1.15,-5.8) -- (2.9,-5.8);
          
           \draw[] (1.15,-6.2) -- (2.9,-6.6);
          \draw[] (1.15,-6.6) -- (2.9,-6.2);
          
           \draw[] (1.15,-7) -- (2.9,-7.4);
          \draw[] (1.15,-7.4) -- (2.9,-7);
         \draw[] (1.15,-7.4) -- (2.9,-7.8);
          \draw[] (1.15,-7.8) -- (2.9,-7.4);
          
          \draw[] (1.15,-8.6) -- (2.9,-8.2);

        \draw[dashed] (1.15,-7) -- (2.9,-5);
          \draw [dashed](1.15,-7.4) -- (2.9,-5.4);
          \draw [dashed](1.15,-7.8) -- (2.9,-5.8);
          
                  \draw[dashed] (1.15,-7) -- (2.9,-5);
                   \draw[dashed] (1.15,-7.4) -- (2.9,-5);
                    \draw[dashed] (1.15,-7.8) -- (2.9,-5);
                    \draw [dashed](1.15,-7.0) -- (2.9,-5.4);
          \draw [dashed](1.15,-7.4) -- (2.9,-5.4);
          \draw [dashed](1.15,-7.8) -- (2.9,-5.4);
           \draw [dashed](1.15,-7.0) -- (2.9,-5.8);
          \draw [dashed](1.15,-7.4) -- (2.9,-5.8);
          \draw [dashed](1.15,-7.8) -- (2.9,-5.8);
          
              \draw[dashed] (1.15,-8.2) -- (2.9,-7);
          \draw[dashed] (1.15,-8.2) -- (2.9,-7.4);
         \draw[dashed] (1.15,-8.2) -- (2.9,-7.8);
          \draw[dashed] (1.15,-8.6) -- (2.9,-7);
          \draw[dashed] (1.15,-8.6) -- (2.9,-7.4);
         \draw[dashed] (1.15,-8.6) -- (2.9,-7.8);
         
               \draw[dashed] (1.15,-5) -- (2.9,-6.2);
                \draw[dashed] (1.15,-5.4) -- (2.9,-6.2);
                 \draw[dashed] (1.15,-5.8) -- (2.9,-6.2);
                 \draw[dashed] (1.15,-5) -- (2.9,-6.6);
                \draw[dashed] (1.15,-5.4) -- (2.9,-6.6);
                 \draw[dashed] (1.15,-5.8) -- (2.9,-6.6);
                 
           \draw[dashed] (1.15,-6.2) -- (2.9,-7);
          \draw[dashed] (1.15,-6.2) -- (2.9,-7.4);
         \draw[dashed] (1.15,-6.2) -- (2.9,-7.8);
          \draw[dashed] (1.15,-6.6) -- (2.9,-7);
          \draw[dashed] (1.15,-6.6) -- (2.9,-7.4);
         \draw[dashed] (1.15,-6.6) -- (2.9,-7.8);
\end{tikzpicture}
\caption{\small Bipartite graph $\hat{\B}(\bA_{\T}, \bB_{\T})$. The solid edges represent $\hat{\E}_{S}$ and dashed edges represent $\hat{\E}_{N}$.}\label{fig:bip1}
\end{subfigure}~\hspace{2 mm}
\begin{subfigure}[b]{0.2\textwidth}
\centering
\begin{tikzpicture}[scale=1.55,shorten >=1pt,auto,node distance=1.85cm, main node/.style={circle,draw,font=\scriptsize\bfseries}]
\definecolor{almond}{rgb}{0.94, 0.87, 0.8}
\definecolor{bubblegum}{rgb}{0.99, 0.76, 0.8}
\definecolor{columbiablue}{rgb}{0.61, 0.87, 1.0}
\definecolor{myblue}{RGB}{80,80,160}
\definecolor{mygreen}{RGB}{80,160,80}
\definecolor{myred}{RGB}{144, 12, 63}
          
          \fill[almond] (1,-5) circle (4.0 pt);
          \fill[almond] (1,-5.4) circle (4.0 pt);
          \fill[almond] (1,-5.8) circle (4.0 pt);
          \fill[almond] (1,-6.2) circle (4.0 pt);
          \fill[almond] (1,-6.6) circle (4.0 pt);
          \fill[almond] (1,-7) circle (4.0 pt);
           \fill[almond] (1,-7.8) circle (4.0 pt);
          \fill[almond] (1,-7.4) circle (4.0 pt);
         \fill[almond] (1,-8.2) circle (4.0 pt);
          \fill[almond] (1,-8.6) circle (4.0 pt);
          
          \node at (1,-5) {\small ${x'}_1^1$};
          \node at (1,-5.4) {\small ${x'}_2^1$};
          \node at (1,-5.8) {\small ${x'}_3^1$};
          \node at (1,-6.2) {\small ${x'}_1^2$};
          \node at (1,-6.6) {\small ${x'}_2^2$};
          \node at (1,-7) {\small ${x'}_1^3$};
          \node at (1,-7.4) {\small ${x'}_2^3$};
          \node at (1,-7.8) {\small ${x'}_3^3$};
         \node at (1,-8.2) {\small ${x'}_1^4$};
          \node at (1,-8.6) {\small ${x'}_2^4$};
          
          \fill[bubblegum] (3,-4.6) circle (4.0 pt);   

          \fill[columbiablue] (3,-5) circle (4.0 pt);
          \fill[columbiablue] (3,-5.4) circle (4.0 pt);
          \fill[columbiablue] (3,-5.8) circle (4.0 pt);
          \fill[columbiablue] (3,-6.2) circle (4.0 pt);
          \fill[columbiablue] (3,-6.6) circle (4.0 pt);
          \fill[columbiablue] (3,-7) circle (4.0 pt);
          \fill[columbiablue] (3,-7.4) circle (4.0 pt);
          \fill[columbiablue] (3,-7.8) circle (4.0 pt);
          \fill[columbiablue] (3,-8.2) circle (4.0 pt);
          \fill[columbiablue] (3,-8.6) circle (4.0 pt);
     
          \node at (3,-4.6) {\small $u^1_1$};    
          \node at (3,-5) {\small $x_1^1$};
          \node at (3,-5.4) {\small $x_2^1$};
          \node at (3,-5.8) {\small $x_3^1$};
          \node at (3,-6.2) {\small $x_1^2$};
          \node at (3,-6.6) {\small $x_2^2$};
          \node at (3,-7) {\small $x_1^3$};
          \node at (3,-7.4) {\small $x_2^3$};
          \node at (3,-7.8) {\small $x_3^3$};
          
           \node at (3,-8.2) {\small $x_1^4$};
          \node at (3,-8.6) {\small $x_2^4$};
          
          \draw[] (1.15,-5) -- (2.9,-4.6);       
          \draw [](1.15,-5.4) -- (2.9,-5.0);
           \draw [](1.15,-5.8) -- (2.9,-5.8);
           
           \draw[] (1.15,-6.2) -- (2.9,-6.6);
          \draw[] (1.15,-6.6) -- (2.9,-6.2);
          
           \draw[] (1.15,-7) -- (2.9,-7.4);
          \draw[] (1.15,-7.4) -- (2.9,-7);
          
          \draw[] (1.15,-8.6) -- (2.9,-8.2);

          \draw [dashed](1.15,-7.8) -- (2.9,-5.4);
          
         \draw[dashed] (1.15,-8.2) -- (2.9,-7.8);

\end{tikzpicture}
\caption{\small A minimum weight bipartite matching in $\hat{\B}(\bA_{\T}, \bB_{\T})$. The dashed edges represent $M_A \cap \hat{\E}_{N}$. Edge set $\hat{E}_N(M_A) = \{ (x_2^1, x_3^3), (x_3^3, x_1^4) \}.$}\label{fig:bip2}
\end{subfigure}
\caption{\small Implementation of minimum weight matching algorithm (Stage~1 of Algorithm~\ref{alg:twostage}) for the subsystems given in Figure~\ref{fig:illus_2}.}
\end{figure}

Now we construct the directed graph $\pT_{\cN}$ shown in Figure~\ref{fig:illus_3}. For the given subsystems the vertex set in $\pT_{\cN}$ is $\{\U, \cN_1, \ldots, \cN_7 \}$, where $\cN_1 =x_1^1$, $\cN_2 =x_2^1$, $\cN_3 =x_3^1$, $\cN_4 = S_2$, $\cN_5 =S_3$, $\cN_6 =x_1^4$, and $\cN_7 =x_2^4$. A minimum spanning tree of $\pT_{\cN}$ rooted at $\U$ is shown in Figure~\ref{fig:illus_4}.
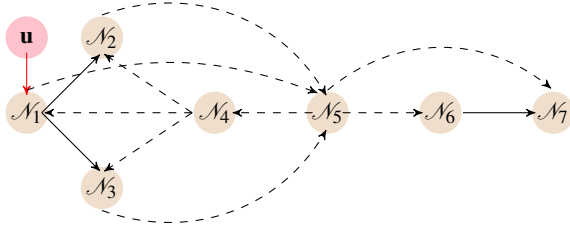
\begin{figure}[h]
\centering
\begin{tikzpicture}[->,>=stealth',shorten >=1pt,auto,node distance=1.85cm, main node/.style={circle,draw,font=\scriptsize\bfseries}]
\definecolor{myblue}{RGB}{80,80,160}
\definecolor{almond}{rgb}{0.94, 0.87, 0.8}
\definecolor{bubblegum}{rgb}{0.99, 0.76, 0.8}
\definecolor{columbiablue}{rgb}{0.61, 0.87, 1.0}

  \fill[bubblegum] (5,1.0) circle (8.0 pt);

  \fill[almond] (5.0,0) circle (8.0 pt);
  \fill[almond] (6,1) circle (8.0 pt);
  \fill[almond] (6, -1) circle (8.0 pt);
  
  \fill[almond] (7.5,0) circle (8.0 pt);

  \fill[almond] (9.0,0) circle (8.0 pt);

   \fill[almond] (10.5,0) circle (8.0 pt);
    \fill[almond] (12.0,0) circle (8.0 pt);

   \node at (5,1.0) {\small $\U$};     

  \node at (5,0) {\small $\cN_1$};
  \node at (6,1) {\small $\cN_2$};
  \node at (6,-1) {\small $\cN_3$};

  \node at (7.5,0) {\small $\cN_4$};

  \node at (9,0) {\small $\cN_5$};
      
    \node at (10.5,0) {\small $\cN_6$};
     \node at (12.0,0) {\small $\cN_7$};

\draw (5.2,0)  ->   (6,0.8);  
\draw (5.2,0)  ->   (6,-0.8);  

 \draw[dashed] (7.2,0) -> (5.2,0); 
  \draw[dashed] (7.2,0) -> (6,0.8); 
   \draw[dashed] (7.2,0) -> (6,-0.8); 
   \draw[dashed] (9.2,0) -> (10.3,-0.0); 
   
   \draw[dashed] (8.8,0) -> (7.7,-0.0); 
     
   \draw[red] (5,0.8) -> (5,0.2); 
   
    \draw[] (10.8,0) -> (11.8,0); 

\path[every node/.style={font=\sffamily\small}]
(9,0.3) edge[dashed, bend left = 40] node [left] {} (12,0.3)
(6, 1.3) edge[dashed,bend left = 40] node [left] {} (9,0.2)
(6, -1.3) edge[dashed,bend right = 40] node [right] {} (9,-0.2)

(5,0.3) edge[dashed,bend left = 20] node [left] {} (8.9,0.25);
%%
%(9.5,0.25) edge[bend left = 40] node [left] {} (10.25,0.25)
%(10.25,-0.25) edge[bend left = 40] node [left] {} (9.5,-0.25);
\end{tikzpicture}
\caption{\small Directed graph $\pT_{\cN}$ with vertex set SCC's of subsystems and $\U$. Solid edges represent $E^1_{\cN}$, dashed edges represent $E^2_{\cN}$, and the red edge represent $E^3_{\cN}$.}
\label{fig:illus_3}
\end{figure}

\begin{figure}[h]
\centering
\begin{tikzpicture}[->,>=stealth',shorten >=1pt,auto,node distance=1.85cm, main node/.style={circle,draw,font=\scriptsize\bfseries}]
\definecolor{myblue}{RGB}{80,80,160}
\definecolor{almond}{rgb}{0.94, 0.87, 0.8}
\definecolor{bubblegum}{rgb}{0.99, 0.76, 0.8}
\definecolor{columbiablue}{rgb}{0.61, 0.87, 1.0}

  \fill[bubblegum] (5,1.0) circle (8.0 pt);

  \fill[almond] (5.0,0) circle (8.0 pt);
  \fill[almond] (6,1) circle (8.0 pt);
  \fill[almond] (6, -1) circle (8.0 pt);
  
  \fill[almond] (7.5,0) circle (8.0 pt);

  \fill[almond] (9.0,0) circle (8.0 pt);

   \fill[almond] (10.5,0) circle (8.0 pt);
    \fill[almond] (12.0,0) circle (8.0 pt);

   \node at (5,1.0) {\small $\U$};     

  \node at (5,0) {\small $\cN_1$};
  \node at (6,1) {\small $\cN_2$};
  \node at (6,-1) {\small $\cN_3$};

  \node at (7.5,0) {\small $\cN_4$};

  \node at (9,0) {\small $\cN_5$};
      
    \node at (10.5,0) {\small $\cN_6$};
     \node at (12.0,0) {\small $\cN_7$};

\draw (5.2,0)  ->   (6,0.8);  
\draw (5.2,0)  ->   (6,-0.8);

   \draw[dashed] (9.2,0) -> (10.3,-0.0); 
   
   \draw[dashed] (8.8,0) -> (7.7,-0.0); 
     
   \draw[red] (5,0.8) -> (5,0.2); 
   
    \draw[] (10.8,0) -> (11.8,0); 

\path[every node/.style={font=\sffamily\small}]
(6, 1.3) edge[dashed,bend left = 40] node [left] {} (9,0.2);
\end{tikzpicture}
\caption{\small Minimum spanning tree $T_A$ of $\pT_{\cN}$. Dashed edges represent $T_A \cap E^2_{\cN}$. Edge set $\widetilde{E}_{\cN}(T_A) = \{ (x_2^1, x_1^3), (x_1^3, x_1^2), (x_1^3, x_1^4)  \}$.}
\label{fig:illus_4}
\end{figure}
The interconnection edge set obtained as output of Algorithm~\ref{alg:twostage} is  $\{ (x_2^1, x_3^3), (x_3^3, x_1^4), (x_2^1, x_1^3), (x_1^3, x_1^2), (x_1^3, x_1^4)  \}$.
%%%%%%%%%%%%%%%%%%%%%%%%%%%%%%%%%%%%%%%%%%%%%%
\section{Extensions}\label{sec:ext}
In this section, we discuss possible extensions of the proposed algorithm and results. The cases considered are (a)~weighted optimal constrained network topology design problem and (b)~switched linear systems. Weights of interconnection links reflect the   installation and monitoring cost of interaction links between different subsystems considering system specific constraints like delay and spatial locations. 

\subsection{Weighted Optimal Constrained Interconnection Problem}\label{subsec:weight}
The weighted constrained optimal  network topology design problem is as follows: given a set of subsystems $S_1, \ldots, S_k$, neighbor set of each subsystem $N(S_i)$, for $i=1, \ldots, k$, and a weight function that associate a weight with each interconnection link, say interconnection link $(x^i_p, x^j_q)$ is associated with weight $c_{I}(x^i_p, x^j_q)$ for all $p \in \{1, \ldots, n_i\}$, $q\in \{1, \ldots, n_j\}$ and $i \neq j$ satisfying $S_j \in N(S_i)$, find a minimum weight optimal network topology design of the composite system that is structurally controllable.
We now give the following result.

\begin{cor}\label{cor:approx_ext}
Consider $k$ structured subsystems $(\bA_i, \bB_i)$, where $\bA_i  \in \{0,\*\}^{n_i \times n_i}$ and $\bB_i \in \{0, \*\}^{n_i \times m_i}$, the neighbor set $N(S_i)$, for $i  \in \{1,\ldots, k\}$, and the weight associated with each possible interconnection link. Then,\\ (i)~constrained optimal network topology design problem with weights for interconnection links is NP-hard, and \\
(ii)~Algorithm~\ref{alg:twostage} with modified $w_{\sB}$ and $w_{\sT}$ returns a set of interconnections such that the composite system obtained using the interconnections is a $2$-optimal solution to the weighted version of Problem~\ref{prob:int}.
\end{cor}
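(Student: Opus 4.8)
The plan is to treat the two parts separately and derive both from the corresponding unweighted results almost verbatim. For part~(i), I would observe that the unweighted Problem~\ref{prob:int} is precisely the special case of the weighted problem in which every admissible interconnection link carries unit cost, i.e. $c_{I}(x^i_p, x^j_q) = 1$. Since minimizing total weight under unit weights coincides with minimizing the number of interconnections, and the latter is NP-hard by Theorem~\ref{thm:NP}, the weighted problem contains an NP-hard problem as a special case and is therefore itself NP-hard. No new reduction is required.

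For part~(ii), the first step is to specify the modified weight functions. In Stage~1 I would keep $w_{\sB}(e) = 0$ for every intra-subsystem edge $e \in \hat{\E}_S$ and set $w_{\sB}(e) = c_{I}(x^i_p, x^j_q)$ for every interconnection edge $e = ({x'}^i_p, x^j_q) \in \hat{\E}_N$, so that a minimum-weight perfect matching $M_A$ now minimizes the total \emph{cost} of interconnection edges needed to remove all dilations. In Stage~2 I would again assign weight $0$ to the edges of $E^1_{\cN}$ and $E^3_{\cN}$, and to each edge $(\cN_g, \cN_h) \in E^2_{\cN}$ assign the minimum cost $\min\{ c_{I}(x^i_p, x^j_q) : x^i_p \in \cN_g,\ x^j_q \in \cN_h,\ S_j \in N(S_i)\}$ over the interconnection links realizing that SCC-to-SCC edge, recording the minimizer so that $\widetilde{E}_{\cN}(T_A)$ extracts the cheapest realizing link. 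A minimum-weight spanning tree $T_A$ of $\pT_{\cN}$ rooted at $\U$ then minimizes the total cost of interconnection edges needed for accessibility.

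The core step is the $2$-optimality bound, which mirrors the proof of Theorem~\ref{thm:approx} with cardinalities replaced by costs. Let $\Delta_w$ denote the minimum total interconnection cost over all structurally controllable composite systems, attained by some link set $I^\*$. By Proposition~\ref{prop:lin}, the system built from $I^\*$ is both accessible and dilation-free. Its bipartite graph admits a perfect matching whose interconnection edges are realized by links of $I^\*$; since costs are non-negative and $M_A$ is cost-minimal over all perfect matchings, I would conclude $\text{cost}(\hat{E}_N(M_A)) \leqslant \Delta_w$. Likewise, accessibility of the system built from $I^\*$ yields a spanning arborescence of $\pT_{\cN}$ rooted at $\U$ whose $E^2_{\cN}$-edges are realized by links in $I^\*$, and cost-minimality of $T_A$ gives $\text{cost}(\widetilde{E}_{\cN}(T_A)) \leqslant \Delta_w$. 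Adding the two bounds and using subadditivity of cost over the union, $\text{cost}(\hat{E}_N(M_A) \cup \widetilde{E}_{\cN}(T_A)) \leqslant \text{cost}(\hat{E}_N(M_A)) + \text{cost}(\widetilde{E}_{\cN}(T_A)) \leqslant 2\Delta_w$, while the output remains structurally controllable because it still removes all dilations and restores accessibility.

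I expect the main obstacle to be the two lower-bound inequalities rather than the final arithmetic: one must argue carefully that the interconnection edges used by the matching (respectively the arborescence) guaranteed by an \emph{arbitrary} feasible solution form a sub-collection of that solution's installed links, so that cost-minimality of $M_A$ and $T_A$ over all admissible matchings and trees bounds these stage costs from below by $\Delta_w$ rather than from above. Non-negativity of $c_{I}$ is essential both for the sub-collection comparison and for subadditivity over the union, and I would state it explicitly as a standing assumption.
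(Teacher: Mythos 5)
Your proposal is correct and follows essentially the same route as the paper: part~(i) by observing that the unit-weight case is exactly Problem~\ref{prob:int} (NP-hard by Theorem~\ref{thm:NP}), and part~(ii) by modifying $w_{\sB}$ and $w_{\sT}$ to carry the link costs and then repeating the two-inequality argument of Theorem~\ref{thm:approx} with cardinalities replaced by costs. The paper's own proof is only a two-sentence sketch, so your added details --- the explicit modified weights, taking the minimum cost over links realizing each SCC-to-SCC edge in $E^2_{\cN}$, and the standing non-negativity assumption on $c_I$ needed for the sub-collection comparison and subadditivity --- are exactly the right elaborations of what the paper leaves implicit.
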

The proof of NP-hardness follows as the special case with all weights  uniform is NP-hard.  Algorithm~\ref{alg:twostage} and Theorem~\ref{thm:approx} when extended to the weighted case by modifying the weight functions $w_{\sB}$ and $w_{\sT}$, so as to incorporate the weight associated with the interconnections, returns a $2$-optimal solution to the weighted version of Problem~\ref{prob:int}.

\subsection{Switched Linear Systems}\label{subsec:switch}
Switched linear system is a special  class of hybrid control systems consisting of several linear systems and a rule that determines the switching between them (see \cite{SunGeLee-02} and \cite{LiuLinChe-13} for more details).  We consider structured subsystems $S_1, \ldots, S_k$ such that the neighbor set of the subsystems at different instants of time differs on account of communication limitations. This occurs when simultaneous interactions between subsystems are difficult due to interference constraint, however, subsystems can interact at distinct instants of time.  In other words, we consider a  switched LTI system where each switching mode corresponds to a specified neighbor set for each subsystem. Let 
\begin{equation}
\dot{x}(t) = \bA_{\T}^{\sigma(t)}x(t) + \bB_{\T}^{\sigma(t)}u(t),
\end{equation}
where $\sigma: \R_{+} \rightarrow \mathbb{M} \equiv \{1, \ldots, z\}$ is a switching signal such that $( \bA_{\T}^{\sigma(t)}, \bB_{\T}^{\sigma(t)})$ consists of $z$ modes of the composite system  and $\sigma(t) = j$ means that the pair $(\bA_{\T}^j, \bB_{\T}^j)$ is active at instant $t$. It is assumed that when all possible interconnections are established at each mode, then there exists a switching (scheduling) sequence that guarantee controllability of the switched system.  Our objective is to find a minimum cardinality subset of interconnection links to establish among the subsystems in the different modes such that the composite switched system is structurally controllable \cite{PeqPap-17}. 

\begin{cor}\label{cor:approx_switch}
Consider $k$ structured subsystems $(\bA_i, \bB_i)$, where $\bA_i  \in \{0,\*\}^{n_i \times n_i}$ and $\bB_i \in \{0, \*\}^{n_i \times m_i}$, the neighbor set $N_{\mathbb{M}}(S_i)$, for $i  \in \{1,\ldots, k\}$ and $\mathbb{M}=1, \ldots, z$. Then,\\ (i)~constrained optimal network topology design problem is NP-hard for switched linear systems, and \\
(ii)~Algorithm~\ref{alg:twostage} on the union graph of the different modes of the structured system returns a set of interconnections such that the composite system obtained using the interconnections is a $2$-optimal solution to the constrained optimal network topology design problem in switched linear systems.
\end{cor}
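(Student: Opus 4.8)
The plan for NP-hardness is to observe that the switched problem subsumes the single-mode problem already treated in this paper. First I would set $z=1$, so that there is a single mode and $N_{\mathbb{M}}(S_i)=N(S_i)$; then the switched constrained topology design problem is exactly Problem~\ref{prob:int}, which is NP-hard by Theorem~\ref{thm:NP}. Since every instance of Problem~\ref{prob:int} is (trivially) an instance of the switched problem with one mode, the switched problem is at least as hard, and hence NP-hard. This mirrors the argument for Corollary~\ref{cor:approx_ext}, where uniform weights recover Problem~\ref{prob:int}.

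\textbf{Part (ii).} The key idea is to reduce the switched structural controllability requirement to a single-graph condition via a union (aggregate) graph, and then to invoke Theorem~\ref{thm:approx}. First I would recall the characterization of structural controllability of switched linear systems from \cite{PeqPap-17}: under the stated assumption that some switching sequence renders the fully interconnected system controllable, the composite switched system is structurally controllable if and only if the union graph---the graph whose edge set is the union over the modes $\mathbb{M}=1,\ldots,z$ of the state, input, and interconnection edges---is accessible and its associated bipartite graph admits a perfect matching (no dilation). This is the analogue of Proposition~\ref{prop:lin} for the switched setting.

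Next, I would construct this union graph explicitly. The subsystem matrices $\bA_i,\bB_i$ are common to all modes, while the admissible interconnections differ across modes: in the union graph I would take the aggregate out-neighbor set $N(S_i):=\bigcup_{\mathbb{M}=1}^{z} N_{\mathbb{M}}(S_i)$, so that an interconnection $(x^i_p,x^j_q)$ is available precisely when $S_j\in N_{\mathbb{M}}(S_i)$ for some mode $\mathbb{M}$. Because establishing a link costs one interconnection regardless of the mode in which it is placed, a minimum cardinality set of links making the switched system structurally controllable coincides with a minimum cardinality feasible solution of Problem~\ref{prob:int} on this union graph. I would then run Algorithm~\ref{alg:twostage} on the union graph, which returns $\hat{E}_N(M_A)\cup\widetilde{E}_{\cN}(T_A)$.

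Finally, the $2$-optimality follows verbatim from Theorem~\ref{thm:approx}: letting $\Delta$ denote the minimum number of interconnections needed for structural controllability of the switched system, the matching stage gives $\Delta\geqslant|\hat{E}_N(M_A)|$ (the no-dilation lower bound) and the spanning tree stage gives $\Delta\geqslant|\widetilde{E}_{\cN}(T_A)|$ (the accessibility lower bound), so their union has cardinality at most $2\Delta$. The main obstacle I anticipate is the very first step---rigorously justifying that structural controllability of the switched composite system is equivalent to accessibility and no-dilation of the union graph, and verifying that the per-mode nature of the links does not alter the optimal cardinality. Once this equivalence is in hand (via \cite{PeqPap-17}), the correctness and the $2$-optimality guarantee transfer directly from the single-mode analysis, with no change to the weight functions $w_{\sB}$ and $w_{\sT}$.
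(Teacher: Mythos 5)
Your proposal is correct and follows essentially the same route as the paper: NP-hardness by specializing to the single-mode case $z=1$, and $2$-optimality by running Algorithm~\ref{alg:twostage} on the union graph of the modes and transferring the bound from Theorem~\ref{thm:approx}. The only discrepancy is bibliographic: the paper justifies the union-graph characterization of structural controllability for switched systems via Theorem~4 of \cite{LiuLinChe-13} rather than \cite{PeqPap-17}, which it cites only for the problem formulation.
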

The proof of NP-hardness follows as the special case where there is only one mode, i.e., $ \mathbb{M}=1$, is NP-hard. The results and Algorithm~\ref{alg:twostage} can be extended to the switched linear systems case after taking the union of the digraphs of the systems corresponding to different modes using Theorem~4 in \cite{LiuLinChe-13}.
%%%%%%%%%%%%%%%%%%%%%%%%%%%%%%%%%%%%%%%%%%%%%%%%
\section{Conclusion}\label{sec:conclu}
This paper dealt with controllability of complex systems referred as composite systems consisting of many subsystems or agents interconnected to perform some desired task. The analysis is done in a structured framework by using the sparsity pattern of the system matrices. In this paper, we addressed structural controllability of an LTI composite system consisting of several subsystems. Given a set of subsystems and a pre-specified set of out-neighbors of each subsystem, where each subsystem is not necessarily structurally controllable, the objective is to find a minimum cardinality set of interconnections among these subsystems such that the composite system is structurally controllable using the specified input matrix. This problem is referred as the optimal constrained network topology design problem. We first proved that  optimal constrained network topology design problem is NP-hard, using polynomial-time reduction from degree constrained spanning tree problem  (Theorem~\ref{thm:NP}). Then we gave a polynomial-time approximation algorithm to solve the optimal constrained network topology design problem (Algorithm~\ref{alg:twostage}). This algorithm consists of a minimum weight matching algorithm, that guarantee the no-dilation condition for structural controllability, and a minimum spanning tree problem, that guarantee accessibility condition. We proved that the proposed algorithm obtaines a $2$-optimal solution to the optimal constrained network topology design problem (Theorem~\ref{thm:approx}). We extended the approach given in this paper to the case of weighted optimal constrained network topology design (Corollary~\ref{cor:approx_ext}) and constrained optimal network topology design in switched linear systems (Corollary~\ref{cor:approx_switch}).  Needless to elaborate, due to duality between controllability and observability in LTI systems all results of this paper directly follow to the observability problem. 
%%%%%%%%%%%%%%%%%%%%%%%%%%%%%%%%%%%%%%%%%%%%%%%%%%%
\bibliographystyle{myIEEEtran}  
\bibliography{myreferences}
\end{document}